\font\tencmmib=cmmib10 \skewchar\tencmmib '60
\def\lessim{\ \lower4pt\hbox{$
		\buildrel{\displaystyle <}\over\sim$}\ }
\def\gessim{\ \lower4pt\hbox{$\buildrel{\displaystyle >}
		\over\sim$}\ }
\def\e{\mathbb{E}}
\newcommand{\la}{\langle}
\newcommand{\ra}{\rangle}
\newcommand{\p}{\mathbb{P}}
\newcommand{\vecx}{\mathbf{x}}
\newcommand{\veco}{\mathbf{0}}
\newtheorem{lemma}{\bf Lemma}
\newtheorem{theorem}{\bf Theorem}
\newtheorem{remark}{\bf Remark}
\newtheorem{example}{\bf Example}
\newmdtheoremenv{theo}{Theorem}
\newenvironment{Proof of lemma}{\noindent{\bf Proof of Lemma}}{\hfill$\Box$\newline}
\newenvironment{Proof of theorem}{\noindent{\it Proof of Theorem}}{\hfill\scriptsize{$\Box$}\newline}
\newenvironment{Proof of theorems}{\noindent{\bf Proof of Theorems}}{\hfill$\Box$\newline}
\newenvironment{Proof of proposition}{\noindent{\bf Proof of Proposition}}{\hfill$\Box$\newline}
\newenvironment{Proof of propositions}{\noindent{\bf Proof of Propositions}}{\hfill$\Box$\newline}
\newenvironment{Proof of exercise}{\noindent{\it Proof of Exercise:}}{\hfill$\Box$}
\begin{document}
	
		\title{Universality of Superconcentration \\ in the Sherrington-Kirkpatrick Model}

	%\author{Wei-Kuo Chen, Wai-Kit Lam}\thanks{School of Mathematics, University of Minnesota. Email: wkchen@umn.edu. W.-K. Chen's research is partially supported by NSF grant DMS-17-52184.}
	\author{Wei-Kuo Chen\thanks{University of Minnesota. Email: wkchen@umn.edu.  Partly supported by NSF grant DMS-17-52184} \and  Wai-Kit Lam \thanks{National Taiwan University. Email: waikitlam@ntu.edu.tw. Partly supported by NSTC Grant 110-2115-M002-012-MY3}}
	
	\maketitle
	
	\begin{abstract}
		%The phenomenon of superconcentration in the Sherrington-Kirkpatrick (SK) mean field spin glass model is concerned with whether one can impro
		 %It is a well-known consequence of the Gaussian-Poincar\'e inequality that the free energy with $N$ spins is of order $O(N)$. The phenomenon of %superconcentration is concerned about whether one can improve this bound.
	We study the universality of superconcentration for the free energy in the Sherrington-Kirkpatrick (SK)  model. In \cite{chatterjee_disorder_chaos}, Chatterjee showed that when the system consists of $N$ spins and Gaussian disorders, the variance of this quantity is superconcentrated by establishing an upper bound of order $N/\log{N}$, in contrast to the $O(N)$ bound obtained from the Gaussian-Poincar\'e inequality. In this paper, we show that superconcentration indeed holds for any choice of centered disorders with finite third moment, where the upper bound is expressed in terms of  an auxiliary nondecreasing function $f$ that arises in the representation of the disorder as $f(g)$ for $g$ standard normal. Under an additional regularity assumption on $f$, we further show that the variance is of order at most $N/\log{N}$.
	\end{abstract}
	
	\section{Introduction and main results}

	%Loosely speaking, a model is superconcentrated if the classical concentration techniques do not give an optimal bound on the order of the fluctuations. 
The Sherrington-Kirkpatrick (SK) model is an important mean-field spin glass that was introduced to explain some unusual magnetic behaviors of certain alloys. For a given disorder (random variable) $h$ with finite second moment, a given (inverse) temperature $\beta>0,$ and any integer $N\geq 1,$ its Hamiltonian is defined as
\begin{align*}
	-H_N(\sigma)=\frac{\beta}{\sqrt{N}}\sum_{i,j=1}^Nh_{ij}\sigma_i\sigma_j,\,\,\sigma\in\{-1,1\}^N,
\end{align*}
where $(h_{ij})_{1\leq i,j\leq N}$ are i.i.d.\ copies of $h.$ One of the main objectives in the study of the SK model is to understand the limit of the free energy,
	\begin{align*}
		F_N(\beta)=\log \sum_{\sigma\in \{-1,1\}^N}e^{-H_N(\sigma)},
	\end{align*}
which has attracted a lot of attention in physics as well as in mathematics communities, see, for instance, \cite{MPV, Panchenko, Talagrand1, Talagrand2, Talagrand3}.

This paper is concerned about the order of fluctuation for the free energy. When $h$ is standard Gaussian, it can be checked directly from the Gaussian-Poincar\'e inequality that $\mbox{Var}(F_N(\beta))=O(N)$. It is natural to ask whether one can improve this bound as $\mbox{Var}(F_N(\beta))=o(N),$ a phenomenon called superconcentration, introduced in the pioneering work of Chatterjee \cite{chatterjee_disorder_chaos,superconcentration}. In light of this notion, superconcentration was established with the bound that
for any $\beta>0,$ there exists  a constant $C=C(\beta)>0$ such that 
\begin{align}\label{add:eq-4}
	\mathrm{Var}(F_N(\beta)) \leq \frac{CN}{\log {N}},\,\,\forall N\geq 2.
\end{align} 
When $\beta\leq 1/\sqrt{2},$ much sharper bounds were also obtained in the literature.
In the case of $\beta<1/\sqrt{2},$ \cite{Aizenman} showed that $F_N(\beta)$ satisfies a central limiting theorem, and their result implies that $\mbox{Var}(F_N(\beta))=\Theta(1)$\footnote{For two nonnegative sequences $(a_N)_{N\geq 1}$ and $(b_N)_{N\geq 1},$ denote by $a_N=\Theta(b_N)$ if there exist constants $c,C>0$ such that $cb_N\leq a_N\leq Cb_N$ for all $N\geq 1.$}. At $\beta=1/\sqrt{2},$ it was predicted by \cite{Aspelmeier,PR} that a sharp phase transition should occur, namely, $\mbox{Var}(F_N(\beta))=\Theta(\log N)$. Along this conjecture, a partial result $\mbox{Var}(F_N(\beta))=O((\log N)^2)$ was known by the authors, see \cite{CL}. Interestingly, if one now considers the SK model in the presence of an external field, i.e., replacing $-H_N(\sigma)$ with $-H_N(\sigma)+r\sum_{i=1}^N\sigma_i$ for some $r>0$ in the free energy, then it was known in \cite{CDP} that the corresponding free energy obeys a central limit theorem and $\mbox{Var}(F_N(\beta))=\Theta(N)$, agreeing with the rate  obtained from the Gaussian-Poincar\'e inequality instead of exhibiting superconcentration.

While the aforementioned results addressed superconcentration assuming that the disorder is Gaussian,  we aim to investigate this phenomenon for more general choice of disorders. Note that for an arbitrary $h$ with finite second moment, the Efron-Stein inequality readily implies $\mbox{Var}(F_N(\beta))=O(N).$ In contrast to this bound, we say that the free energy is superconcentrated if $\mbox{Var}(F_N(\beta))=o(N).$ 

To state our main results, we assume that $\e h=0$ and $\e h^2=1$.  We express $h=f(g)$ for some nondecreasing function $f$ and a standard Gaussian random variable $g$. Let $g^1,g^2$ be independent copies of $g.$ For $0\leq t\leq 1,$ define
\[
	w(t)=\e f(g^1_t)f(g^2_t),
\]
where  \begin{align}
	\begin{split}
		\label{add:eq--1}
	g^1_t&=\sqrt{t}g+\sqrt{1-t}g^1,\\
	g^2_t&=\sqrt{t}g+\sqrt{1-t}g^2.
		\end{split}
\end{align} 
Note that $w(t)\to 1$ as $t\uparrow 1$ if we further assume $\e |h|^3 < \infty$. Indeed, since $f$ is nondecreasing, $f(g_t^1)f(g_t^2)$ converges to $f(g)^2$ almost surely as $t\uparrow 1$ and for any $M>0,$ denoting $E_{M,t}=\{|g_t^1|,|g_t^2|\leq M\}$, $|f(g_t^1)f(g_t^2)|\mathbf{1}_{E_{M,t}}$ is  uniformly bounded for all $t\in [0,1]$. Consequently, from the bounded convergence theorem,
\begin{align*}
	\lim_{t\uparrow 1}\e\bigr[\bigl|f(g_t^1)f(g_t^2)-f(g)^2\bigr|;E_{M,t}\bigr]=0,\,\,\forall M>0.
\end{align*}
On the other hand, from the H\"older inequality and using the union bound,
\begin{align*}
	\sup_{t\in[0,1]}\e\bigr[\bigl|f(g_t^1)f(g_t^2)\bigr|;E_{t,M}^c\bigr]&\leq \bigl(\e |f(g)|^3\bigr)^{2/3}\sup_{t\in [0,1]}\p(E_{M,t}^c)^{1/3}\\
	&\leq \bigl(\e |h|^3\bigr)^{2/3}\bigl(2\p(|g|>M)\bigr)^{1/3}\to 0\,\,\mbox{as $M\to\infty.$}
\end{align*}
These together yield the desired limit. With this, our first main result shows that superconcentration for the free energy holds for any $h$ with a finite third moment, where the upper bound for the variance is related to the rate of convergence of $w(t)$ at $1.$
\begin{theorem}\label{thm1}
There exist positive constants $c,K>0$ depending only on $\beta$ such that whenever $h$ satisfies  $\e h=0$, $\e h^2=1,$ and $\e |h|^3<\infty$, we have
	\begin{align*}
			\mathrm{Var}(F_N(\beta))\leq K\bigl(\e |h|^3+1\bigr)N\Bigl(1-w\bigl((\log N)^{-c/\log N}\bigr)+\frac{1}{\log N}\Bigr),\,\,\forall N\geq 2.
	\end{align*}
\end{theorem}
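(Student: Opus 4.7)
The plan is to use a two-system Gaussian interpolation, in the spirit of Chatterjee's proof for Gaussian disorder in \cite{chatterjee_disorder_chaos}, adapted via the representation $h=f(g)$. I would introduce two correlated copies of the SK model with disorders $h^k_{ij}(s) = f(g_s^{k,ij})$, $k=1,2$, where for each pair $(i,j)$ the Gaussians $(g_s^{1,ij}, g_s^{2,ij})$ are an independent copy of the pair in (\ref{add:eq--1}); let $F_N^1(s), F_N^2(s)$ be the corresponding free energies and set $\phi(s) = \mathbb{E}[F_N^1(s) F_N^2(s)]$. Since $\phi(0) = (\mathbb{E} F_N)^2$ and $\phi(1) = \mathbb{E}[F_N^2]$, one has $\mathrm{Var}(F_N) = \int_0^1 \phi'(s)\,ds$.

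The key computation is the formula for $\phi'(s)$. The bivariate correlated Gaussian density satisfies the heat-equation identity $\frac{d}{ds}\mathbb{E}[\Phi(g_s^1,g_s^2)] = \mathbb{E}[\partial_1\partial_2 \Phi(g_s^1,g_s^2)]$ (verified by direct differentiation), which applied coordinatewise to $\Phi = F_N^1 F_N^2$ gives, assuming $f$ smooth,
\begin{equation*}
\phi'(s) = \frac{\beta^2}{N}\sum_{i,j=1}^{N} \mathbb{E}\bigl[f'(g_s^{1,ij})\, f'(g_s^{2,ij})\, \langle\sigma_i\sigma_j\rangle_1 \langle\sigma_i\sigma_j\rangle_2\bigr],
\end{equation*}
where $\langle\cdot\rangle_k$ is the Gibbs average in system $k$; the same identity applied to the scalar $w$ yields the useful formula $w'(s) = \mathbb{E}[f'(g_s^1) f'(g_s^2)]$. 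Now split $\int_0^1 = \int_0^{s^*} + \int_{s^*}^{1}$ at $s^* = (\log N)^{-c/\log N}$. The large-$s$ regime is the elementary step: monotonicity of $f$ gives $f' \geq 0$, and $|\langle\sigma_i\sigma_j\rangle_k| \leq 1$ produces the pointwise bound $\phi'(s) \leq \beta^2 N w'(s)$, so
\begin{equation*}
\int_{s^*}^1 \phi'(s)\,ds \,\leq\, \beta^2 N\bigl(1 - w(s^*)\bigr),
\end{equation*}
which is the first term in the claimed estimate.

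The main obstacle is to show $\int_0^{s^*} \phi'(s)\,ds \leq K(\mathbb{E}|h|^3+1)\, N/\log N$; here the trivial bound $\phi'(s) \leq \beta^2 N w'(s)$ would only give $O(N)$. The plan is to view $F_N$ as a function $G((g_{ij}))$ of the Gaussian matrix with $h_{ij}=f(g_{ij})$ and push Chatterjee's $L^2$-to-$L^{1+s}$ hypercontractivity argument on Ornstein--Uhlenbeck Gaussian space through the composition with $f$: for $s$ bounded away from $1$ the two coupled Gibbs averages are built from effectively decorrelated Gaussians, and OU hypercontractivity yields quantitative decay of the cross term $\mathbb{E}[\langle\sigma_i\sigma_j\rangle_1\langle\sigma_i\sigma_j\rangle_2]$; the exponent $c$ is tuned so that with the choice $s^* = (\log N)^{-c/\log N}$ this contributes $O(1/\log N)$. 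The non-smooth factors $f'$ (for general nondecreasing $f$) are handled by truncating $f$ at a Gaussian-moderate level; the contribution of the tail is controlled by a H\"older/union-bound argument of the kind already used in the paragraph preceding Theorem~\ref{thm1} to establish $w(t)\to 1$, and this is the step where the third-moment hypothesis $\mathbb{E}|h|^3<\infty$ enters. The derivative formula for $\phi'(s)$ is then justified for merely nondecreasing $f$ by approximation with smooth monotone functions and passage to the limit. The delicate point --- and the hard step --- is tracking the non-Gaussian $f'$ factors simultaneously with the Gaussian hypercontractive bounds on the Gibbs averages; assembling the two regimes then yields the theorem.
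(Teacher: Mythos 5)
Your skeleton matches the paper's: the same correlated two-replica interpolation $\phi(s)$, the same derivative formula $\phi'(s)=\frac{\beta^2}{N}\sum_{i,j}\e f'(g^{1,ij}_s)f'(g^{2,ij}_s)\la\sigma_i\sigma_j\tau_i\tau_j\ra_s$, the same split of $\int_0^1\phi'$ at a point of the form $(\log N)^{-c/\log N}$, and the same trivial bound $\phi'(s)\le\beta^2Nw'(s)$ on the piece near $1$. But the entire difficulty of the theorem sits in the piece you defer to ``Chatterjee's hypercontractivity argument pushed through $f$,'' and the mechanism you sketch there does not close. Two concrete obstructions. First, $U=f'(g^1_s)f'(g^2_s)$ is correlated with the Gibbs average, so replacing $\e[U\la\cdots\ra_s]$ by $w'(s)\e\la\cdots\ra_s$ requires a quantitative decoupling; the paper's Lemma~\ref{lem1} achieves this with error $O(N^{1/2}/(1-s))$ by writing the Gibbs average as $L(f(g^1_s),f(g^2_s))$ with $\|\nabla L\|_\infty=O(N^{-1/2})$ and, crucially, by converting the resulting moments of $f'$ into moments of $f$ via the monotonicity inequality $\e|\psi(g)|\psi'(g)\le\tfrac12\e|g|\psi(g)^2$ (Lemma~\ref{add:lem1}). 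Your plan of truncating $f$ and using H\"older as in the $w(t)\to1$ argument leaves you needing integrability of $f'$, which the hypotheses do not provide (for the two-point example $f'$ vanishes a.e.\ and the distributional derivative is a point mass); this is exactly the obstruction the third-moment-of-$h$ hypothesis is designed to circumvent, not via tail bounds on $f'$.

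Second, even after decoupling you must show $\e\la R(\sigma,\tau)^2\ra_s=O(N^{-1/2})$ for $s$ bounded away from $1$. Your assertion that ``effectively decorrelated Gaussians'' plus OU hypercontractivity yields decay of the cross term is not substantiated: at $s=0$ the cross term is $(\e\la\sigma_i\sigma_j\ra)^2$, and killing it in Chatterjee's Gaussian proof (the Latala argument) uses the symmetry of the disorder, which is destroyed by composing with a non-odd $f$; moreover the $L^1$--$L^2$/hypercontractive route gives nothing here because $\|\partial_{ij}F\|_1\asymp\|\partial_{ij}F\|_2\asymp N^{-1/2}$ (the paper remarks explicitly that this approach does not transfer to mean-field models). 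The paper instead bounds the overlap by a convexity argument on a coupled free energy $Q(t,\lambda)$ carrying a $\lambda\beta^2NR(\sigma,\tau)^2$ term, compares $Q(0,\cdot)$ to its genuinely Gaussian counterpart by approximate Gaussian integration by parts (this is where $\e|h|^3<\infty$ enters), and only then exploits symmetry through a Rademacher computation. Finally, to carry the resulting small-$s$ bound up to $s^*=(\log N)^{-c/\log N}$, which tends to $1$, you need the complete-monotonicity/Bernstein interpolation $\phi'(t)\le\phi'(s)^{\log t/\log s}\phi'(1)^{1-\log t/\log s}$; this step is at best implicit in your write-up. As it stands the proposal reproduces the easy half of the proof and names, without supplying, the ideas needed for the hard half.
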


In the next main result, we let $f$ be an arbitrary absolutely continuous function and take $h=f(g)$. Note that now $f$ is not necessarily nondecreasing. If we assumption that $\e h=0$, $\e h^2=1$, and $\e|f'(g)|^3<\infty$, then we can obtain superconcentration for the free energy in the same rate as \eqref{add:eq-4}.

\begin{theorem}
	\label{thm2} There exists a constant $K>0$ depending only on $\beta$ such that whenever $h$ satisfies that $\e h=0,$ $\e h^2=1,$  and $f$ is absolutely continuous with $\e |f'(g)|^3<\infty$, we have
	\begin{align}\label{thm2:eq1}
		\mathrm{Var}(F_N(\beta))\leq K\bigl(1+\bigl(\e|f'(g)|^3\bigr)^2\bigr)\frac{N}{\log N},\,\,\forall N\geq 2.
	\end{align}
\end{theorem}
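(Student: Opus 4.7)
Since $h_{ij}=f(g_{ij})$ with $f$ absolutely continuous and $g_{ij}$ i.i.d.\ standard Gaussians, the free energy $F_N$ is a smooth functional of the underlying Gaussian family $g=(g_{ij})$, and the chain rule gives $\partial_{ij} F_N(g) = (\beta/\sqrt{N})\, f'(g_{ij})\, \langle \sigma_i\sigma_j\rangle_g$, where $\langle\cdot\rangle_g$ denotes the Gibbs average with disorder $f(g)$. The Ornstein-Uhlenbeck variance identity then reads
\[
\mathrm{Var}(F_N(\beta)) = \frac{\beta^2}{N}\int_0^1 \e\Bigl[\sum_{i,j}f'(g_{ij})f'(g^t_{ij})\,\langle\sigma_i\sigma_j\rangle_g\,\langle\sigma_i\sigma_j\rangle_{g^t}\Bigr]\, dt,
\]
with $g^t_{ij} = \sqrt{t}\,g_{ij}+\sqrt{1-t}\,g'_{ij}$ and $g'$ an independent copy of $g$. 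This is the main gain from absolute continuity: $F_N$ sits in a Gaussian ambient space, so Chatterjee's Gaussian-case hypercontractivity machinery applies almost directly, in contrast to the situation of Theorem~1, which under absolute continuity alone yields only a $\log\log N/\log N$ rate via the $w$-interpolation.

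The plan is to split the $t$-integration at a cutoff $\delta_N=c/\log N$. On the short interval $[1-\delta_N,1]$, I would use $|\langle\sigma_i\sigma_j\rangle|\le 1$ together with Hölder's inequality (exponents $3,3,3$) applied to the triple $\bigl(f'(g_{ij}),f'(g^t_{ij}),\langle\cdot\rangle\langle\cdot\rangle\bigr)$ to obtain a contribution of size $\beta^2 N\delta_N(\e|f'(g)|^3)^{2/3}=O(N/\log N)$. On the bulk interval $[0,1-\delta_N]$, I would introduce two replicas $\sigma^1\sim\langle\cdot\rangle_g$, $\sigma^2\sim\langle\cdot\rangle_{g^t}$ so that $\sum_{ij}\sigma^1_i\sigma^1_j\sigma^2_i\sigma^2_j=N^2(R_{12}^t)^2$, and write $f'(g_{ij})f'(g^t_{ij})=w'(\sqrt{t})+\xi_{ij}$ with $\xi_{ij}$ mean-zero. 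The mean part contributes $w'(\sqrt{t})\cdot N^2\e[(R_{12}^t)^2]$; combined with $w'(\sqrt{t})\le\e f'(g)^2\le(\e|f'(g)|^3)^{2/3}$ and Chatterjee's Gaussian-case estimate $\int_0^1\e[(R_{12}^t)^2]\,dt\lesssim 1/\log N$, this gives $O(N/\log N)$ with the right constant. The fluctuation piece involving $\xi_{ij}$ is handled by Gaussian integration by parts in $g_{ij},g^t_{ij}$, producing additional Gibbs-derived terms that can be controlled by a further Hölder estimate; this picks up one more factor of $\e|f'(g)|^3$, yielding the combined coefficient $1+(\e|f'(g)|^3)^2$ of \eqref{thm2:eq1}.

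The hardest step is the control of the fluctuation term $\e\bigl[\sum_{ij}\xi_{ij}\sigma^1_i\sigma^1_j\sigma^2_i\sigma^2_j\bigr]$ on the bulk interval. A careless Hölder separation of $f'$-factors from the spin-overlap decouples $g$ from $g^t$ and destroys the disorder-chaos structure that underlies the $1/\log N$ rate. The order of operations I expect to work is to sum over $(i,j)$ first, integrate by parts in $g_{ij},g^t_{ij}$, and only then apply Hölder, so that the overlap structure $(R_{12}^t)^2$ is preserved throughout and Chatterjee's Gaussian SK bound remains applicable. Verifying this carefully and matching constants across the two sub-intervals is the technical heart of the proof.
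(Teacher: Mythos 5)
Your overall skeleton (write the variance as $\int_0^1$ of a correlated-gradient covariance, exploit that $F_N$ is a function of the ambient Gaussians via the chain rule, split the $t$-integral, and feed in an overlap estimate) matches the paper's strategy, and your short-interval and fluctuation-term sketches are workable. But there is a genuine gap at the step you rely on most heavily: you invoke ``Chatterjee's Gaussian-case estimate $\int_0^1\e[(R_{12}^t)^2]\,dt\lesssim 1/\log N$'' for the bulk. That estimate is \emph{not} applicable here, because the overlap $R_{12}^t$ is taken under Gibbs measures whose weights involve the non-Gaussian disorder $f(g_{ij})$, $f(g^t_{ij})$. Chatterjee's derivation of the small-correlation bound $\e\la R^2\ra_s=O(N^{-1/2})$ (which, combined with the complete-monotonicity/H\"older interpolation, is what produces the $1/\log N$ rate) goes through the Latala argument and uses the symmetry of the Gaussian disorder in an essential way; once the disorder is $f(g)$ with general $f$, that argument breaks, and establishing the analogous bound is precisely the technical core of the problem rather than a citable black box. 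The paper fills this hole with a separate lemma: it introduces the coupled free energy $Q(t,\lambda)$ with an explicit tilt $\lambda\beta^2NR(\sigma,\tau)^2$, shows $t\mapsto Q(t,\lambda-w(t))$ is essentially nonincreasing up to an $O(\sqrt N)$ error, uses convexity of $Q$ in $\lambda$ to extract $\e\la R^2\ra_t$, and then compares $Q(0,\cdot)$ to a genuinely Gaussian reference system via \emph{approximate} Gaussian integration by parts (costing a factor $\e|h|^3$, later converted to $\e|f'(g)|^3$ by the Gaussian--Poincar\'e inequality $\e|f(g)|^3\le C\e|f'(g)|^3$). Only after that does the hypercontractive propagation $\eta(r)\le\eta(s)^{\log r/\log s}\eta(1)^{1-\log r/\log s}$ and the integration over $r$ give $\int\eta\lesssim 1/\log N$. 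Without this lemma (or an equivalent substitute), your bulk term is uncontrolled and the proof does not close.

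Two smaller points. First, the ``hardest step'' you identify (keeping the overlap structure intact while separating the mean-zero part of $f'(g_{ij})f'(g^t_{ij})$) is handled in the paper by a first-order Taylor/fundamental-theorem identity in the two variables $X_\ell=f(g^\ell_{t,ij})$ rather than by integration by parts in $g_{ij}$ itself; the resulting error is $O(\sqrt N)$ per unit of $t$ with constant $(\e|h|^3)^{1/3}(\e|f'(g)|^3)^{2/3}+\e f'(g)^2$, which is where the quadratic dependence on $\e|f'(g)|^3$ in \eqref{thm2:eq1} comes from --- so your bookkeeping is consistent, but note that you need $\e|h|^3<\infty$, which you have not assumed; it follows from $\e|f'(g)|^3<\infty$ only via the Gaussian--Poincar\'e inequality. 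Second, for merely absolutely continuous $f$ the pointwise chain rule and the repeated integrations by parts are not directly licensed; a truncation-and-mollification step (as in the paper's ``General Case'') is needed to reduce to smooth $f$ with derivatives of moderate growth.
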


A few remarks are in position.

\begin{remark}
	\rm We do not expect to obtain the bound $N/\log N$  in Theorem~\ref{thm2} directly from Theorem~\ref{thm1}. Nevertheless, if in Theorem \ref{thm1} we assume additionally that $f$ is differentiable and $\e|f'(g)|^2<\infty$, then one can bound $1-w(t)\leq (1-t)\e |f'(g)|^2$, which follows from the mean value theorem and the fact that $w'$ is nondecreasing, see \eqref{add:eq-3} below. As a result, 
	\begin{align*}
		1-w\bigl((\log N)^{-c/\log N}\bigr)\leq \e|f'(g)|^2(1-(\log N)^{-c/\log N})\leq \e |f'(g)|^2 \frac{c\log\log{N}}{\log{N}}
	\end{align*} 
and this implies that $\mbox{Var}(F_N(\beta))=O(N\log\log N/\log N).$
\end{remark}

\begin{remark}
	\rm Under the assumption that the first four moments of $h$ agree with those of $g$ and $h$ has a finite fifth moment, one can manage to match the first and second moments of the free energies associated to $h$ and $g$ asymptotically by using the approximate Gaussian integration by parts, which will lead to $\mbox{Var}(F_N)=O(N/\log N)$ as \eqref{add:eq-4} and \eqref{thm2:eq1}, see, for example,  \cite{CNV}. Our main results address superconcentration by reducing the moment assumption as much as possible.
\end{remark}

\begin{remark}
	\rm In a more general framework, one can consider the mixed even $p$-spin model, whose Hamiltonian is defined as $$
	-H_N(\sigma)=\sum_{p=1}^\infty \frac{\beta_p}{N^{(p-1)/2}}\sum_{1\leq i_1,\ldots, i_{2p}\leq N} h_{i_1,\ldots,i_{2p}}\sigma_{i_1}\cdots\sigma_{i_{2p}},
	$$ 
	where $h_{i_1,\ldots,i_{2p}}$ for all $1\leq i_1,\ldots,i_{2p}\leq N$ and all $p\geq 1$ are i.i.d.\ copies of $h$ and $(\beta_p)_{p\geq 1}$ is a real sequence with $\sum_{p=1}^\infty 2^p\beta_p^2<\infty.$ In \cite{chatterjee_disorder_chaos},  Chatterjee showed that the corresponding free energy is again superconcentrated as long as $h$ is standard normal. We point out that the same results in Theorems \ref{thm1} and \ref{thm2} also hold in this setting by the same argument in this paper. 
\end{remark}

%\begin{remark}
%	\rm It is natural to ask whether $F_N(\beta)$ remains superconcentrated if we only assume that $\e h = 0$ and $\e h^2 = 1$. It is not clear to us whether %there is a limitation in our proof (so that it only works under a finite third moment assumption), or there is actually no superconcentration when $h$ has only %two moments. We leave this as an open problem.
%\end{remark}

We now discuss three applications of Theorems~\ref{thm1} and \ref{thm2}.

\begin{example}
	\rm If $f$ is either a polynomial or a Lipschitz function so that $\e h=0$ and $\e h^2=1,$ then Theorem \ref{thm2} holds. 
\end{example}
\begin{example}[Uniform distribution]\rm
Let $h$ be a uniform random variable on the interval $[-\sqrt{3},\sqrt{3}]$. In this case, one can write $h=f(g) = \sqrt{3}(2\Phi(g) - 1)$, where $\Phi(x):=(2\pi)^{-1/2}\int_{-\infty}^xe^{-a^2/2}~da$ for $x\in \mathbb{R}$ is the CDF of $g$. It can be checked that the assumptions in Theorem \ref{thm2} are satisfied and thus, we have $\mathrm{Var}(F_N(\beta)) =O(N/\log {N}).$
\end{example}

\begin{example}[Two-point distribution]\rm
Let $a<0<b$ and $p \in (0,1)$ satisfy $ap + b(1-p) = 0$ and $a^2p+b^2(1-p)=1$. Suppose that $\p(h = a) = p = 1 - \p(h=b)$. Note that $\e h = 0$, $\e h^2 = 1$ and $\e |h|^3 < \infty$.  We claim that
\[
\mathrm{Var}(F_N(\beta))=O\Bigl(N\sqrt{\frac{\log\log{N}}{\log{N}}}\Bigr).
\] 
To show this, we use Theorem \ref{thm1} by expressing $h=f(g)$ for
\[
f(g) = 
\begin{cases}
	a & \text{if $g\leq \Phi^{-1}(p)$,}\\
	b & \text{if $g > \Phi^{-1}(p)$},
\end{cases}
\]
where $\Phi$ is the CDF of $g.$ 
Note that $f$ is nondecreasing. Denote $\gamma=\Phi^{-1}(p).$ A direct computation shows that
\begin{align*}
	w(t) = 2ab\p(g^1_t \leq \gamma, g^2_t > \gamma) + a^2\p(g^1_t\leq \gamma, g^2_t\leq \gamma) + b^2\p(g^1_t > \gamma, g^2_t > \gamma).
\end{align*}
To compute these probabilities, we write
\begin{align*}
	\p(g^1_t \leq \gamma, g^2_t > \gamma) &= \p\bigl(\sqrt{t}g + \sqrt{1-t}g^1\leq \gamma, \sqrt{t}g + \sqrt{1-t}g^2 > \gamma\bigr)\\
	&=\iint_{\{x < y\}} \p\bigl(\sqrt{1-t}x \leq \gamma - \sqrt{t}g < \sqrt{1-t}y\bigr)\frac{e^{-(x^2+y^2)/2}}{2\pi}~dx~dy\\
	&=\iint_{\{x<y\}} \int_{\sqrt{1-t}x-\gamma}^{\sqrt{1-t}y-\gamma} \frac{e^{-z^2/(2t)}}{\sqrt{2\pi t}}~dz~\frac{e^{-(x^2+y^2)/2}}{2\pi}~dx~dy=:\Omega(t).
\end{align*}
On the other hand, 
\begin{align*}
	\p(g^1_t\leq \gamma, g^2_t\leq \gamma) &= p - \Omega(t),\\
\p(g^1_t > \gamma, g^2_t > \gamma) &= (1-p) - \Omega(t).
\end{align*}
From these and using $a^2p + b^2(1-p) = 1$, we arrive at
\begin{align*}
	w(t) &= 1 - (a-b)^2\Omega(t)\geq 1 -  \sqrt{1-t}(a-b)^2 \iint_{\{x<y\}} \frac{y-x}{\sqrt{2\pi t}} \cdot \frac{e^{-(x^2+y^2)/2}}{2\pi}~dx~dy.
\end{align*}
Thus, we can find a constant $d>0$ such that if $t$ is sufficiently close to $1$,
\[
w(t) \geq 1 - d\sqrt{1-t}.
\]
%Repeating the same calculations as in the Rademacher case, 
Theorem~\ref{thm1} then implies that for some constants $C, c>0$ and for $N$ large,
\[
\mathrm{Var}(F_N(\beta)) \leq CN\Bigl(\sqrt{1 - (\log{N})^{-c/\log{N}}} + \frac{1}{\log{N}}\Bigr)
\]
and our claim follows by noting that
\[
1 - (\log{N})^{-c/\log{N}} = 1 - \exp\Bigl(-\frac{c\log\log{N}}{\log{N}}\Bigr) \leq \frac{c\log\log{N}}{\log{N}}.
\]
\end{example}

\noindent {\bf Proof Sketch.} Our proof is based on Chatterjee's interpolation argument (see \cite{chatterjee_disorder_chaos,superconcentration}) in proving superconcentration for the free energy in the SK model with Gaussian disorder $h=g$. The argument starts by visualizing $F_N(\beta)$ as a function $F$ of i.i.d.\ standard Gaussian $\mathbf{g}=(g_{ij})_{1\leq i,j\leq N}$ and writing $\mbox{Var}(F_N(\beta))=\phi(1)-\phi(0)$, where for independent copies $\mathbf{g}^1$ and $\mathbf{g}^2$ of $\mathbf{g},$ $$\phi(t):=\e F(\mathbf{g}_t^1)F(\mathbf{g}_t^2),\,\,0\leq t\leq 1$$
for \begin{equation}
	\label{eq: interpolation_gaussian}
	\begin{split}
		\mathbf{g}_t^1&=\sqrt{t}\mathbf{g}+\sqrt{1-t}\mathbf{g}^1,\\
		\mathbf{g}_t^2&=\sqrt{t}\mathbf{g}+\sqrt{1-t}\mathbf{g}^2.
	\end{split} 
\end{equation}
The first key step uses Gaussian integration by parts inductively to show that $\psi(a)=\phi'(e^{-a})$ for $a\geq 0$ is a completely monotone function, and from the Bernstein theorem, this function can be represented as $\psi(a)=\int_{[0,\infty)}e^{-as}~\mu(ds)$ for some positive measure $\mu.$ Consequently, from H\"older's inequality, for all $0< s\leq t<1,$
\begin{align}\label{add:eq1}
	\phi'(t)\leq \phi'(s)^{\frac{\log t}{\log s}}\phi'(1)^{1-\frac{\log t}{\log s}}.
\end{align}
In the second step,  one relates $\phi'(t)$ to the second moment of the cross overlap associated to the interpolated spin system. In particular, by employing the so-called Latala argument (see \cite[Lemma 10.4]{superconcentration}), it can be shown that $\phi'(s)=O(1)$ as long as $s$ is small enough, where the fact that $g$ is symmetric was heavily used. On the other hand, the above inequality makes it possible to get $\phi'(t)=O(N^{1-\log t/\log s})$ whenever $t\geq s.$ With these, one readily obtains the desired bound $O(N/\log N)$ utilizing the relation $\mbox{Var}(F_N(\beta))=\int_0^1\phi'(t)~dt.$ 

In our argument, we adapt the following interpolation
$$\phi(t)=\e F(f(\mathbf{g}_t^1))F(f(\mathbf{g}_t^2)),\,\,0\leq t\leq 1.$$
Now the terms in $\phi'(t)$ involve $f'$ (see \eqref{add:eq-1} below). While \eqref{add:eq1} remains valid, the main difficulty arises in obtaining an useful bound for $\phi'(s)$ with small $s$. To this end, for technical purposes, we adapt the convexity argument in \cite{CL} by considering the coupled free energy \eqref{eq: Q_def} instead of using the Latala approach. Our control  in some sense relies on  an approximate Gaussian integration by parts argument throughout.

\medskip

{\noindent \bf Universality of Superconcentration in Other Models.}
Superconcentration does not only exhibit in mean-field spin glass models, but also in random growth models on the integer lattice such as first-passage percolation \cite{DHS}, directed polymers \cite{AZ}, frog model \cite{CN}.

For first-passage percolation, after a series of work \cite{BKS, BR, DHS}, it is shown that under a $2+\log$ moment assumption, the model exhibits superconcentration, and it does not depend on the distribution of the disorder. Similar results hold for many related models. In \cite{chatterjee_surface}, Chatterjee shows that superconcentration holds in a certain type of ``surface growth models'', which includes directed last-passage percolation and directed polymers, under the assumption that the disorder is a Lipschitz function of a Gaussian random variable. 

The approach to superconcentration for growth models relies on the idea in \cite{BKS}, which consists of two components: one is the $L^1$-$L^2$ bound by Talagrand (or its variants), and the other is the translation invariance of the model. Even though the superconcentration results look very similar in both mean-field spin glasses and random growth models (the upper bounds for the variances are also of order $N/\log{N}$), this approach does not seem to work in mean-field spin glass models in any obvious way, due to the fact that spin glass models and growth models are very different in nature.

\medskip

{\noindent \bf Acknowledgements.} Both authors thank S.~Bobkov for bringing \cite{Gaussian_poincare} to their attention.

\section{Proof of Theorem~\ref{thm1}}

\subsection{Some auxiliary lemmas}

In this subsection, we shall gather three elementary lemmas that will be used in our main controls later.
Let $\mathbf{g}=(g_1,\ldots,g_k)$, $\mathbf{g}^1=(g_1^1,\ldots,g_k^1),$ and $\mathbf{g}^2=(g_1^2,\ldots,g_k^2)$ be i.i.d.\ standard normal. For $0\leq t\leq 1,$ let $\mathbf{g}_t^1$ and $\mathbf{g}_t^2$ be defined as in \eqref{eq: interpolation_gaussian}.
The first lemma controls the derivative of the expectation associated to this interpolation. Although the proof has appeared in \cite{chatterjee_disorder_chaos}, we still provide a proof for completeness. We say that $F:\mathbb{R}^k\to \mathbb{R}$ is of moderate growth if $\lim_{\|\mathbf{x}\|\to \infty}|F(\mathbf{x})|e^{-a\|\mathbf{x}\|^2}=0$ for all $a>0.$ 

\begin{lemma}\label{lem-1}
	Assume that $F:\mathbb{R}^{k}\to \mathbb{R}$ is smooth and all of its partial derivatives are of moderate growth. Define 
	\begin{align*}
		\phi(t)=\e F(\mathbf{g}_t^1)F(\mathbf{g}_t^2),\,\,0\leq t\leq 1.
	\end{align*}
	Then for any $0\leq t\leq 1,$
	\begin{align}\label{lem-1:eq1}
		\phi'(t)&=\sum_{i=1}^k\e \partial_{x_i}F(\mathbf{g}_t^1)\partial_{x_i}F(\mathbf{g}_t^2)
	\end{align}
	and
	for any $0<s<t\leq 1,$ 
	\begin{align}\label{lem-1:eq2}
		\phi'(t)\leq \phi'(s)^{\frac{\log t}{\log s}}\phi'(1)^{1-\frac{\log t}{\log s}}.
	\end{align} 
\end{lemma}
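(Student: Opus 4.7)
The plan is to establish \eqref{lem-1:eq1} by differentiating under the expectation and applying Gaussian integration by parts, then iterate that identity and combine it with the Hermite expansion of $F$ to represent $\phi'(e^{-a})$ as a Laplace transform of a positive measure, from which \eqref{lem-1:eq2} falls out via H\"older's inequality.

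For \eqref{lem-1:eq1}, I would first differentiate under the expectation (the moderate growth hypothesis justifies the interchange, since the relevant integrands are dominated by integrable functions uniformly on compact subintervals of $(0,1)$), yielding
\begin{align*}
	\phi'(t)=\sum_{i=1}^k\e\Bigl[\bigl(\tfrac{g_i}{2\sqrt{t}}-\tfrac{g_i^1}{2\sqrt{1-t}}\bigr)\partial_iF(\mathbf{g}_t^1)F(\mathbf{g}_t^2)+F(\mathbf{g}_t^1)\bigl(\tfrac{g_i}{2\sqrt{t}}-\tfrac{g_i^2}{2\sqrt{1-t}}\bigr)\partial_iF(\mathbf{g}_t^2)\Bigr].
\end{align*}
Gaussian integration by parts is then applied coordinate by coordinate. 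Since $g_i$ sits in both $\mathbf{g}_t^1$ and $\mathbf{g}_t^2$ with coefficient $\sqrt{t}$, while $g_i^1$ appears only in $\mathbf{g}_t^1$ with coefficient $\sqrt{1-t}$ (and analogously for $g_i^2$ and $\mathbf{g}_t^2$), one obtains $\e[g_i\partial_iF(\mathbf{g}_t^1)F(\mathbf{g}_t^2)]=\sqrt{t}\e[\partial_i^2F(\mathbf{g}_t^1)F(\mathbf{g}_t^2)]+\sqrt{t}\e[\partial_iF(\mathbf{g}_t^1)\partial_iF(\mathbf{g}_t^2)]$ together with $\e[g_i^1\partial_iF(\mathbf{g}_t^1)F(\mathbf{g}_t^2)]=\sqrt{1-t}\e[\partial_i^2F(\mathbf{g}_t^1)F(\mathbf{g}_t^2)]$. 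The $\partial_i^2F\cdot F$ contributions cancel, leaving $\tfrac12\e[\partial_iF(\mathbf{g}_t^1)\partial_iF(\mathbf{g}_t^2)]$ for each $i$ from the first summand; the second summand contributes an identical copy by symmetry, giving \eqref{lem-1:eq1}.

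For \eqref{lem-1:eq2}, I would iterate \eqref{lem-1:eq1}: each partial derivative of $F$ is again smooth and of moderate growth, so applying the same identity with $\partial^n_{i_1\cdots i_n}F$ in place of $F$ yields, by induction,
\begin{align*}
	\phi^{(n)}(t)=\sum_{i_1,\ldots,i_n=1}^k\e\bigl[\partial^n_{i_1\cdots i_n}F(\mathbf{g}_t^1)\,\partial^n_{i_1\cdots i_n}F(\mathbf{g}_t^2)\bigr]
\end{align*}
for every $n\geq 1$. To convert this into log-convexity, I would expand $F$ in the multivariate Hermite basis, $F=\sum_\alpha c_\alpha H_\alpha$ (valid because moderate growth places $F$ in $L^2$ with respect to the standard Gaussian measure on $\Reals^k$). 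Mehler's formula gives $\e[H_\alpha(\mathbf{g}_t^1)H_\beta(\mathbf{g}_t^2)]=\delta_{\alpha\beta}\,\alpha!\,t^{|\alpha|}$, whence $\phi(t)=\sum_\alpha c_\alpha^2\alpha!\,t^{|\alpha|}$ and therefore
\begin{align*}
	\psi(a):=\phi'(e^{-a})=\sum_{k\geq 0}\mu_k e^{-ak},\qquad \mu_k:=(k+1)\sum_{|\alpha|=k+1}c_\alpha^2\alpha!\geq 0.
\end{align*}
Thus $\psi$ is the Laplace transform of a positive (discrete) measure, and H\"older's inequality applied to this representation gives $\psi(\theta a_1+(1-\theta)a_2)\leq\psi(a_1)^\theta\psi(a_2)^{1-\theta}$ for every $\theta\in[0,1]$. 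Taking $a_1=-\log s$, $a_2=0$, and $\theta=\log t/\log s\in(0,1)$ (valid since $\log s<\log t<0$ for $0<s<t<1$), the bound is precisely \eqref{lem-1:eq2}; the boundary case $t=1$ is trivial. This route replaces the paper's appeal to Bernstein's theorem with the explicit Hermite representation, but one can equivalently deduce complete monotonicity of $\psi$ from the nonnegativity of $\phi^{(n)}$.

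The main technical hurdle is the justification of the various interchanges: differentiation through the expectation, Gaussian integration by parts for functions of moderate growth, and term-by-term manipulation of the Hermite series (in particular, its termwise differentiation, which requires $\sum_\alpha c_\alpha^2\alpha!|\alpha|<\infty$, equivalently that each $\partial_iF$ lie in the Gaussian $L^2$). The moderate growth hypothesis is tailored so that all dominating integrands are integrable uniformly in $t$ on compact subintervals of $(0,1)$, and it is preserved under further differentiation, so the induction in the second step proceeds cleanly.
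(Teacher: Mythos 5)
Your proof of \eqref{lem-1:eq1} is the same computation as the paper's (differentiate under the expectation, apply Gaussian integration by parts in $g_i$, $g_i^1$, $g_i^2$, and watch the $\partial_i^2F\cdot F$ terms cancel), and your iterated identity for $\phi^{(n)}$ is exactly the paper's induction. Where you genuinely diverge is in how you extract \eqref{lem-1:eq2} from it. The paper only uses the \emph{nonnegativity} of $\phi^{(n)}$, concludes that $\psi(a)=\phi'(e^{-a})$ is completely monotone, and invokes Bernstein's theorem to obtain the representation $\psi(a)=\int e^{-ax}\,\mu(dx)$ before applying H\"older. You instead expand $F$ in the multivariate Hermite basis and use Mehler's formula to compute $\phi(t)=\sum_\alpha c_\alpha^2\alpha!\,t^{|\alpha|}$ explicitly, which exhibits the representing measure concretely as a discrete measure on the nonnegative integers and makes Bernstein's theorem unnecessary. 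Both routes are correct. Yours buys an explicit and self-contained representation (and as a byproduct re-proves $\phi^{(n)}\geq 0$); its cost is the extra analytic bookkeeping you correctly flag — $F$ and its first partials must lie in the Gaussian $L^2$ so that $\sum_\alpha c_\alpha^2\alpha!\,|\alpha|<\infty$ and the series for $\phi'$ can be manipulated termwise, with $\phi'(1)$ interpreted via Abel summation/monotone convergence; the moderate-growth hypothesis supplies all of this. The paper's route is shorter at the price of citing Bernstein's theorem as a black box. Your choice of $a_1=-\log s$, $a_2=0$, $\theta=\log t/\log s$ in the log-convexity inequality reproduces \eqref{lem-1:eq2} exactly.
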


\begin{proof}
	By symmetry,
	\begin{align*}
		\phi'(t)&=\sum_{i=1}^k\e \Bigl(\frac{g_{i}}{\sqrt{t}}-\frac{g_{i}^1}{\sqrt{1-t}}\Bigr)\e \partial_{x_i}F(\mathbf{g}_t^1)\cdot F(\mathbf{g}_t^2).
	\end{align*}
	Using Gaussian integration by part yields
	\begin{align*}
		\e \Bigl(\frac{g_{i}}{\sqrt{t}}-\frac{g_{i}^1}{\sqrt{1-t}}\Bigr)\e \partial_{x_i}F(\mathbf{g}_t^1)\cdot F(\mathbf{g}_t^2)&=\e \partial_{x_i}F(\mathbf{g}_t^1)\cdot \partial_{x_i}F(\mathbf{g}_t^2)
	\end{align*}
	and this gives \eqref{lem-1:eq1}. As for the second assertion, note that each term in $\phi'(t)$ is of the same form as that of $\phi(t)$. Hence, we can apply induction to show that
	\begin{align}
		\begin{split}\label{add:eq-3}
			\phi^{(n)}(t)&=\sum_{i_1,\ldots,i_n=1}^k\e \partial_{x_{i_1}\cdots x_{i_n}}F(\mathbf{g}_t^1)\partial_{x_{i_1}\cdots x_{i_n}}F(\mathbf{g}_t^2)\\
			&=\sum_{i_1,\ldots,i_n=1}^k\e \bigl[\e\bigl[\partial_{x_{i_1}\cdots x_{i_n}}F(\mathbf{g}_t^1)\big|\mathbf{g}\bigr]^2\bigr]\geq 0.
		\end{split}
	\end{align}
	Here, if we set $\psi(a)=\phi'(e^{-a})$ for $0\leq a<\infty,$ then $\psi$ is completely monotone, i.e., $(-1)^n\psi^{(n)}(a)\geq 0$ for all $0<a<\infty$ and $n\geq 0.$ From this and the Bernstein theorem (see for instance \cite[Section~XIII.4]{Bernstein}), one can express $\psi(a)=\int e^{-ax}~\mu(dx)$ for all $a>0$ by some finite positive measure $\mu$ on $[0,\infty).$ From the H\"older inequality, for any $0<a<b,$
	\begin{align*}
		\psi(a)&\leq \Bigl(\int e^{-bx}~\mu(dx)\Bigr)^{\frac{a}{b}}\Bigl(\int 1~\mu(dx)\Bigr)^{1-\frac{a}{b}}=\psi(b)^{\frac{a}{b}}\psi(0)^{1-\frac{a}{b}}
	\end{align*}
	and this is equivalent to \eqref{lem-1:eq2}.
\end{proof}

\begin{lemma}\label{add:lem2}Assume that $Y,X_1,X_2$ are random variables with finite second moment and $\e Y=0.$ Assume that $L:\mathbb{R}^2 \to \mathbb{R}$ is a differentiable function with uniformly bounded partial derivatives. Then
	\begin{align*}
		\e YL(X)&=\int_0^1 \e [\partial_{x_1}L(sX)X_1Y+\partial_{x_2}L(sX)X_2Y]~ds,
	\end{align*}
	where $X=(X_1,X_2).$
\end{lemma}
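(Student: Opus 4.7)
The plan is to reduce this to the fundamental theorem of calculus applied to the function $s \mapsto L(sX)$, combined with the zero-mean assumption on $Y$ and a Fubini swap.

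First, since $\e Y = 0$ and $L(0)$ is a constant, I would write
\[
\e Y L(X) = \e Y \bigl(L(X) - L(0)\bigr).
\]
The right-hand side makes sense because $L(X) - L(0)$ is bounded in absolute value by $C(|X_1|+|X_2|)$ where $C$ is a uniform bound on the partial derivatives of $L$, and by Cauchy--Schwarz together with the finite second moment hypotheses on $Y, X_1, X_2$, the product $Y(L(X)-L(0))$ is integrable.

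Next, I would apply the one-dimensional fundamental theorem of calculus to the $C^1$ function $s \mapsto L(sX_1, sX_2)$ on $[0,1]$ (for each fixed realization of $X$), obtaining
\[
L(X) - L(0) = \int_0^1 \frac{d}{ds} L(sX_1, sX_2)\, ds = \int_0^1 \bigl[\partial_{x_1}L(sX) X_1 + \partial_{x_2}L(sX) X_2\bigr]\, ds.
\]
Multiplying by $Y$ and taking expectations yields
\[
\e Y L(X) = \e \int_0^1 Y\bigl[\partial_{x_1}L(sX) X_1 + \partial_{x_2}L(sX) X_2\bigr]\, ds.
\]

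The only remaining point is to justify swapping the expectation and the integral, which I would do via Fubini--Tonelli. The integrand is dominated (uniformly in $s \in [0,1]$) by $C|Y|(|X_1|+|X_2|)$, whose expectation is finite by Cauchy--Schwarz and the finite second moment assumptions. Hence Fubini applies and one obtains
\[
\e Y L(X) = \int_0^1 \e\bigl[\partial_{x_1}L(sX) X_1 Y + \partial_{x_2}L(sX) X_2 Y\bigr]\, ds,
\]
as claimed. There is no real obstacle here; the lemma is essentially a packaging of the integral-remainder form of a first-order Taylor expansion around the origin, exploited via $\e Y = 0$ so that $L(0)$ disappears. The only care needed is in the integrability bookkeeping for the Fubini step, which the boundedness of $\nabla L$ and the $L^2$ assumptions handle cleanly.
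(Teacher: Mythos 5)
Your proof is correct and follows essentially the same route as the paper's: write $L(X)=L(\mathbf{0})+\int_0^1\nabla L(sX)\cdot X\,ds$, use $\e Y=0$ to kill the constant term, multiply by $Y$ and take expectations. The paper leaves the Fubini/integrability bookkeeping implicit, which you spell out; no substantive difference.
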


\begin{proof}
	Write
	$
	L(\mathbf{x})=L(\veco)+\int_0^1 \nabla L(s\vecx)\cdot \vecx~ds.
	$
	From this and $\e Y=0,$ putting $\mathbf{x} = X$, multiplying by $Y$ and taking expectation on both sides complete our proof.
\end{proof}

\begin{lemma}\label{add:lem1}
	Let $\psi$ be a differentiable function on $\mathbb{R}$ and be of moderate growth. Then $$
	\e |\psi(g)|\psi'(g)\leq \frac{1}{2}\e|g|\psi(g)^2.
	$$
	%where $g\sim N(0,1).$
	
\end{lemma}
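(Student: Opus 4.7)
The plan is to recognize $|\psi(g)|\psi'(g)$ as (half) the derivative of a natural auxiliary function and then apply Gaussian integration by parts, losing a factor of the sign in the final step. Specifically, set
\[
\Psi(x) := |\psi(x)|\psi(x) = \mathrm{sgn}(\psi(x))\,\psi(x)^2.
\]
Away from the zero set of $\psi$, differentiating with the chain rule gives $\Psi'(x) = 2|\psi(x)|\psi'(x)$. At any $x_0$ with $\psi(x_0)=0$ one has $\Psi(x_0)=0$ and, using differentiability of $\psi$ at $x_0$, $|\Psi(x_0+h)| \leq \psi(x_0+h)^2 = o(h)$, so $\Psi'(x_0)=0$, which agrees with $2|\psi(x_0)|\psi'(x_0)=0$. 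Hence $\Psi$ is differentiable everywhere with $\Psi' = 2|\psi|\psi'$.

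Gaussian integration by parts then yields
\[
2\,\e\,|\psi(g)|\psi'(g) \;=\; \e\,\Psi'(g) \;=\; \e\bigl[g\,\Psi(g)\bigr] \;=\; \e\bigl[g\,\mathrm{sgn}(\psi(g))\,\psi(g)^2\bigr].
\]
Because $g\,\mathrm{sgn}(\psi(g)) \leq |g|$ pointwise, the right-hand side is at most $\e\,|g|\psi(g)^2$. Dividing by $2$ gives the claim.

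The one subtlety is justifying Gaussian integration by parts for $\Psi$, which is only $C^1$ (not $C^2$), and whose derivative involves $|\psi|$. The clean fix is to smooth $|t|$ by $\sqrt{t^2+\eps}$ and work with $\Psi_\eps(x):=\sqrt{\psi(x)^2+\eps}\,\psi(x)$, which is smooth and inherits moderate growth from $\psi$; standard Gaussian IBP applies to each $\Psi_\eps$, and one then sends $\eps\downarrow 0$ via dominated convergence, using moderate growth of $\psi$ together with the integrability of $|\psi(g)\psi'(g)|$ (which is implicit for the statement to be meaningful, and follows from Cauchy--Schwarz once $\psi(g),\psi'(g)\in L^2$). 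I expect this smoothing-and-limit step to be the only real obstacle; the remainder is the one-line IBP identity together with the elementary sign inequality $g\,\mathrm{sgn}(\psi(g))\leq |g|$.
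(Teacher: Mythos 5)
Your proof is correct and rests on the same underlying identity as the paper's, namely $\e |\psi(g)|\psi'(g)=\tfrac12\e\bigl[g\,\mathrm{sgn}(\psi(g))\psi(g)^2\bigr]$ followed by the pointwise bound $g\,\mathrm{sgn}(\psi(g))\leq |g|$; the two arguments differ only in how the nondifferentiability of the absolute value is negotiated. The paper decomposes $\{\psi\neq 0\}$ into maximal open intervals of constant sign and performs classical integration by parts against the Gaussian density on each interval, the boundary terms vanishing because $\psi=0$ at the finite endpoints and by moderate growth at $\pm\infty$; you instead observe that $\Psi=|\psi|\psi$ is differentiable everywhere with $\Psi'=2|\psi|\psi'$ (your check at the zeros of $\psi$ is right) and apply Stein's identity once, globally, using the $\sqrt{\psi^2+\eps}$ regularization to justify it. Your version is arguably cleaner --- a single application of Gaussian integration by parts, with no bookkeeping of the index set $I$ or the signs $w_l$ --- at the cost of the smoothing-and-dominated-convergence step (whose domination requires $\psi(g)\psi'(g)$ and $\psi'(g)$ integrable). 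Both proofs need this same implicit integrability for the statement to make sense, and it is harmless here since the lemma is only invoked under assumption $(A)$, where the relevant $\psi$ is smooth with derivatives of moderate growth.
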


\begin{proof}
	Let $\Gamma=\{x\in \mathbb{R}:\psi(x)=0\}.$ Since $\psi$ is continuous, $\Gamma$ is closed and we can write $\Gamma^c$ as a disjoint union of open intervals $\bigcup_{l\in I} (a_l,b_l)$, where $I\subseteq \mathbb{N}$ is some index set. Here, on each $(a_l,b_l)$, $\psi$ takes a fixed sign, and on $\{a_1,b_1,a_2,b_2,\ldots\}$, $\psi=0.$ From this, we can rewrite
	\begin{align}\label{add:eq3}
		\e |\psi(g)|\psi'(g)&=\sum_{l\in I}w_l\e \psi(g)\psi'(g)\mathbf{1}_{(a_l,b_l)}(g)
	\end{align}
	for some sequence $\{w_l\}_{l\in I}$ with $w_l=1$ or $-1.$ Now, we compute directly 
	\begin{align*}
		\e \psi(g)\psi'(g)\mathbf{1}_{(a_l,b_l)}(g)&=\frac{1}{\sqrt{2\pi}}\int_{a_l}^{b_l}e^{-x^2/2}\psi(x)\psi'(x)~dx\\
		&=\frac{1}{2\sqrt{2\pi}}\psi(x)^2e^{-x^2/2}\Big|_{a_l}^{b_l}+\frac{1}{2\sqrt{2\pi}}\int_{a_l}^{b_l}x\psi(x)^2e^{-x^2/2}~dx\\
		&=\frac{1}{2}\e g\psi(g)^2 \mathbf{1}_{(a_l,b_l)}(g).
	\end{align*}
	From \eqref{add:eq3}, the assertion follows.
\end{proof}

\subsection{Main controls}

Throughout this entire subsection, we assume that $f$ satisfies the following assumption,
\begin{align*}
	(A):\quad 
	\begin{split}
		&\mbox{$f$ is nondecreasing and smooth with $\e f(g)=0$, $\e f(g)^2=1$, and}\\
		&\mbox{$\e|f(g)|^3<\infty,$ and  its derivatives of all orders are of moderate growth.}
	\end{split}
\end{align*}
Let $h=f(g)$. Let $\mathbf{g}=(g_{ij})_{1\leq i,j\leq N}$, $\mathbf{g}^1=(g_{ij}^1)_{1\leq i,j\leq N}$, and $\mathbf{g}^2=(g_{ij}^2)_{1\leq i,j\leq N}$ be i.i.d.\ standard Gaussian. For any $0\leq t\leq 1,$ set 
\begin{align*}
\mathbf{g}_t^1&=\sqrt{t}\mathbf{g}+\sqrt{1-t}\mathbf{g}^1,\\
\mathbf{g}_t^2&=\sqrt{t}\mathbf{g}+\sqrt{1-t}\mathbf{g}^2.	
\end{align*}
Define
\begin{align}\label{add:eq-2}
	\phi(t)=\e \log  Z_t^1\log  Z_t^2,
\end{align}
where for $\ell = 1, 2$, 
$$
Z_t^\ell=\sum_{\sigma \in \{-1, 1\}^N}\exp \Bigl(\frac{\beta}{\sqrt{N}}\sum_{i,j=1}^Nf(g_{t,ij}^\ell)\sigma_i\sigma_j\Bigr),
$$
and $g_{t,ij}^\ell$ is the $(i,j)$-th entry of $\mathbf{g}_t^\ell$. From \eqref{lem-1:eq1},	
\begin{align}\label{add:eq-1}
	\phi'(t)=\frac{\beta^2}{N}\sum_{i,j=1}^N\e f'(g_{t,ij}^1)f'(g_{t,ij}^2)\la \sigma_i\sigma_j\tau_i\tau_j\ra_t.
\end{align}
Here, $\la \cdot \ra_t$ is the Gibbs expectation associated to the following Gibbs measure
\[
\frac{1}{Z_t^1 Z_t^2}\sum_{\sigma, \tau \in \{-1, 1\}^N} \mathbf{1}_{\{(\sigma, \tau) \in \cdot\}} \exp \Bigl(\frac{\beta}{\sqrt{N}}\sum_{i,j=1}^Nf(g_{t,ij}^1)\sigma_i\sigma_j\Bigr)\exp \Bigl(\frac{\beta}{\sqrt{N}}\sum_{i,j=1}^Nf(g_{t,ij}^2)\tau_i\tau_j\Bigr).
\]
Recall that $w(t)=\e f(g_t^1)f(g_t^2).$ From Lemma \ref{lem-1}, we also have $w'(t)=\e f'(g_t^1)f'(g_t^2)$. Finally, for $\sigma, \tau \in \{-1, 1\}^N$, we define $R(\sigma, \tau) = N^{-1}\sum_{i=1}^N \sigma_i \tau_i$.

\begin{lemma}\label{lem1}
	Under the assumption $(A)$,	we have for $0< t < 1$ that
	\begin{align*}
		%\label{lem1:eq1}
		\phi'(t)&\leq \beta^2Nw'(t)\e\bigl\la R(\sigma,\tau)^2\bigr\ra_t+4\beta^3N^{1/2}R_0(t),
	\end{align*}
	where $$R_0(t):=\frac{2^{1/3}\e |h|^{3}+1}{1-t}.$$
\end{lemma}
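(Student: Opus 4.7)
\medskip
\noindent\textbf{Proof proposal.} The plan is to decompose $\phi'(t)$ into a main term and an error by subtracting the mean $w'(t)=\e f'(g_t^1)f'(g_t^2)$. Setting $Y_{ij}:=f'(g_{t,ij}^1)f'(g_{t,ij}^2)-w'(t)$ (so $\e Y_{ij}=0$) and using $\sum_{i,j}\sigma_i\sigma_j\tau_i\tau_j=N^2 R(\sigma,\tau)^2$, the representation \eqref{add:eq-1} rewrites as
\[
\phi'(t) = \beta^2 N w'(t)\,\e\la R(\sigma,\tau)^2\ra_t + \frac{\beta^2}{N}\sum_{i,j=1}^N\e\bigl[Y_{ij}\,\la\sigma_i\sigma_j\tau_i\tau_j\ra_t\bigr],
\]
so the remaining task is to bound the error term by $4\beta^3 N^{1/2} R_0(t)$.

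For each fixed $(i,j)$ I would invoke Lemma~\ref{add:lem2} with $Y=Y_{ij}$, $X=(f(g_{t,ij}^1),f(g_{t,ij}^2))$, and $L(u,v)$ the Gibbs correlator $\la\sigma_i\sigma_j\tau_i\tau_j\ra_t$ in which the $(i,j)$-Hamiltonian contributions $f(g_{t,ij}^1)\sigma_i\sigma_j$ and $f(g_{t,ij}^2)\tau_i\tau_j$ are replaced by $u\sigma_i\sigma_j$ and $v\tau_i\tau_j$ respectively. The standard Gibbs-derivative identity gives $\|\partial_u L\|_\infty,\,\|\partial_v L\|_\infty\le 2\beta/\sqrt N$, hence
\[
\bigl|\e[Y_{ij}\,\la\sigma_i\sigma_j\tau_i\tau_j\ra_t]\bigr|\le \frac{2\beta}{\sqrt N}\,\e\bigl[(|f(g_{t,ij}^1)|+|f(g_{t,ij}^2)|)\,|Y_{ij}|\bigr].
\]
Summing over $(i,j)$ and invoking replica symmetry (which gives $\e|f(g_{t,ij}^1)||Y_{ij}|=\e|f(g_{t,ij}^2)||Y_{ij}|$) then reduces the entire problem to the single estimate $\e[|f(g_{t,ij}^1)|\,|Y_{ij}|]\le R_0(t)$.

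Because $f$ is nondecreasing, $f'\ge 0$ and $w'(t)\ge 0$, so the triangle inequality yields $|Y_{ij}|\le f'(g_{t,ij}^1)f'(g_{t,ij}^2)+w'(t)$, and I would treat the two resulting pieces separately. For the $w'(t)$ piece, a double Gaussian integration by parts (in $g^1$ and $g^2$) yields $w'(t)=(1-t)^{-1}\e[g^1 g^2 f(g_t^1)f(g_t^2)]$; conditioning on the shared Gaussian $g$ to factor the expectation across the two replicas, followed by Cauchy--Schwarz in each conditional factor, gives $w'(t)\le(1-t)^{-1}$, and combined with $\e|f(g_{t,ij}^1)|\le 1$ this contributes $1/(1-t)$. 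For the $f'f'$ piece, the plan is to apply Lemma~\ref{add:lem1} conditionally on $(g_{ij},g_{ij}^2)$ with $\psi(x)=f(\sqrt{t}\,g_{ij}+\sqrt{1-t}\,x)$, using the nonnegativity of $f'(g_{t,ij}^2)$ to pull it aside:
\[
\e\bigl[|f(g_{t,ij}^1)|\,f'(g_{t,ij}^1)\,f'(g_{t,ij}^2)\bigr] \le \frac{1}{2\sqrt{1-t}}\,\e\bigl[|g_{ij}^1|\,f(g_{t,ij}^1)^2\,f'(g_{t,ij}^2)\bigr].
\]
One further Gaussian integration by parts in $g_{ij}^2$ (the remaining factors being $g_{ij}^2$-independent) eliminates the last $f'$ and produces the bound $(2(1-t))^{-1}\e[|g_{ij}^1 g_{ij}^2|\,f(g_{t,ij}^1)^2\,|f(g_{t,ij}^2)|]$. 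Finally, conditioning on $g_{ij}$ factors this last expectation into two independent replica pieces, and H\"older's inequality with exponents $(3,3/2)$ applied inside each conditional expectation, together with a matching H\"older in the outer expectation over $g_{ij}$, bounds the quantity by an absolute constant times $\e|h|^3$; the constant is easily checked to be no larger than $2^{1/3}$.

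The main obstacle is the minimal moment assumption $\e|h|^3<\infty$: a direct H\"older on the four unconditional factors $|g^1|,|g^2|,f(g_1)^2,|f(g_2)|$ would force some $|f|$-exponent above $3$ and therefore blow up. The argument evades this through three ingredients combined in sequence: Lemma~\ref{add:lem2} extracts the $\beta/\sqrt N$ factor from the mean-zero structure of $Y_{ij}$ without ever touching $f'$; Lemma~\ref{add:lem1} trades $|f|f'$ for $|g|f^2$ and so avoids any moment of $f'$; and conditional independence of the two replicas given the shared Gaussian $g_{ij}$ allows the final $|f|^2\cdot|f|$ H\"older to be distributed across replicas while keeping each factor at the $|f|^3$ level.
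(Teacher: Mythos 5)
Your proposal is correct and follows essentially the same route as the paper: the same decomposition $f'f'=w'(t)+Y_{ij}$, the same application of Lemma~\ref{add:lem2} to the Gibbs correlator to extract the $2\beta/\sqrt{N}$ factor, the same conditional use of Lemma~\ref{add:lem1} followed by Gaussian integration by parts in $g^2$, and the same $L^3$-level H\"older estimate yielding the constant $2^{1/3}$. The only cosmetic difference is that you organize the final H\"older/Cauchy--Schwarz steps by conditioning on the shared Gaussian, whereas the paper pairs $g^1$ with $f(g_t^2)$ and $g^2$ with $f(g_t^1)$ to exploit unconditional independence; both give the same bounds.
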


\begin{proof}
Let $X_1=f(g_t^1)$ and $X_2=f(g_t^2).$ Denote $U=f'(g_t^1)f'(g_t^2).$
	Since $f'$ is nonnegative,
	\begin{align*}
		\e |X_1|U&=\e\Bigl[|f(g_t^1)|f'(g_t^1)f'(g_t^2)\Bigr]\\
		&=\e\Bigl[\e_{g^1}\bigl[|f(g_t^1)|f'(g_t^1)\bigr]f'(g_t^2)\Bigr]\\
		&\leq \frac{1}{2\sqrt{1-t}}\e\Bigl[\e_{g^1}\bigl[|g^1|f(g_t^1)^2\bigr]f'(g_t^2)\Bigr]\\
		&=\frac{1}{2\sqrt{1-t}}\e \bigl[f(g_t^1)^2|g^1|f'(g_t^2)\bigr]\\
		&=\frac{1}{2(1-t)}\e \bigl[f(g_t^1)^2|g^1|g^2f(g_t^2)\bigr],
	\end{align*}
	where the inequality used Lemma \ref{add:lem1} for $\e_{g^1}\bigl[|f(g_t^1)|f'(g_t^1)\bigr]$ and the last equality used Gaussian integration by parts with respect to $g^2.$
	From the H\"older inequality and independence, it follows that
	\begin{align*}
		\nonumber	\e |X_1|U	&\leq \frac{1}{2(1-t)}\bigl(\e |f(g_t^1)^2g^2|^{3/2}\bigr)^{2/3}\bigl(\e |g^1f(g_t^2)|^{3}\bigr)^{1/3}\\
		\nonumber 	&=\frac{1}{2(1-t)}(\e |h|^{3})(\e|g|^{3/2})^{2/3}(\e |g|^3)^{1/3}\\
		%\label{add:eq-5}	
		&\leq \frac{1}{2(1-t)}\e |h|^{3}\cdot (\e |g|^3)^{2/3}=\frac{2^{1/3}}{1-t}\e |h|^{3}.
	\end{align*}
	The same bound is also valid for $\e|X_2|U.$
	From these, putting $Y = U - \e U$, and using the fact that $f'\geq 0$, we have
	\begin{align*}
		\e |X_1Y|&\leq 	\e |X_1|U+\e |X_1|\cdot \e U\leq \frac{2^{1/3}}{1-t}\e |h|^{3}+w'(t),\\
		\e |X_2Y|&\leq 	\e |X_2|U+\e |X_2|\cdot \e U\leq \frac{2^{1/3}}{1-t}\e |h|^{3}+w'(t).
	\end{align*}
	Using Gaussian integration by parts and the Cauchy-Schwarz inequality,
	\begin{align}	
		\nonumber	w'(t)&=\frac{1}{1-t}\e g^1g^2f(g_t^1)f(g_t^2)\\
		\nonumber&\leq \frac{1}{1-t}\bigl(\e |g^1f(g_t^2)|^2\bigr)^{1/2}\bigl(\e |g^2f(g_t^1)|^2\bigr)^{1/2}\\
		\label{add:eq-9}	&=\frac{1}{1-t}\bigl(\e |g^1|^2\cdot \e |f(g_t^2)|^2\bigr)^{1/2}\bigl(\e|g^2|^2\cdot\e |f(g_t^1)|^2\bigr)^{1/2}=\frac{1}{1-t}.
	\end{align}
	It follows that
	\begin{align*}
		\max\bigl(\e |X_1Y|,\e |X_2Y|\bigr)\leq \frac{2^{1/3}\e |h|^{3}+1}{1-t}=:R_0(t).
	\end{align*}
	From these and Lemma \ref{add:lem2}, for $X:=(X_1,X_2)$ and twice differentiable $L$ with $\max_{\ell=1,2}\|\partial_{x_\ell}L\|_\infty\leq \gamma,$ we arrive at
	\begin{align}
		\nonumber		\e UL(X)&=\e U\cdot \e L(X)+\e YL(X)\\
		\nonumber		&=\e U\cdot \e L(X) + \int_0^1 \e\big[\bigl(\partial_{x_1}L(sX)X_1+\partial_{x_2}L(sX)X_2\bigr)Y\big]~ds\\
		\nonumber				&\leq w'(t)\e L(X)+\gamma \bigl(\e |X_1Y|+\e |X_2Y|\bigr)\\
		\label{add:eq4}		&\leq w'(t) \e L(X)+2\gamma R_0(t).
	\end{align}
%	where we used $\e U=w'(t)$.
	Now for fixed $i,j$, conditionally on $\mathbf{g}_t^1$ and $\mathbf{g}_t^2$ except $g_{t,ij}^1$ and $g_{t,ij}^2,$ we express $\la \sigma_i\sigma_j\tau_i\tau_j\ra_t$ as $L(X)$ in distribution. A direct computation gives
	\begin{align*}
		\partial_{x_1}L(x_1,x_2)&=\frac{\beta}{\sqrt{N}}\la \sigma_i^1\sigma_j^1\tau_i^1\tau_j^1(\sigma_i^1\sigma_j^1-\sigma_i^2\sigma_j^2)\ra_t,\\
		\partial_{x_2}L(x_1,x_2)&=\frac{\beta}{\sqrt{N}}\la \sigma_i^1\sigma_j^1\tau_i^1\tau_j^1(\tau_i^1\tau_j^1-\tau_i^2\tau_j^2)\ra_t,
	\end{align*}
	where $(\sigma^1, \tau^1)$ and $(\sigma^2, \tau^2)$ are i.i.d.\ samples from the Gibbs measure associated to $\la\cdot\ra_t$. From these, $		\max_{\ell=1,2}\|\partial_{x_\ell}F\|_\infty\leq 2\beta N^{-1/2}.$
	Consequently, from \eqref{add:eq4} and using conditional expectation,
	\begin{align*}
		\e f'(g_{t,ij}^1)f'(g_{t,ij}^2)\la \sigma_i\sigma_j\tau_i\tau_j\ra_t&=\e [\e_{X_1,X_2}[UL(X)]]\leq w'(t)\e \la \sigma_i\sigma_j\tau_i\tau_j\ra_t+\frac{4\beta}{N^{1/2}}R_0(t).
	\end{align*}
	Summing these up over all $i,j$ completes our proof.
\end{proof}

\begin{lemma}\label{lem-0} 
	Assume that $(A)$ holds. 
	There exists a constant $K$ depending only on $\beta$ such that whenever $t\in [0,1)$ satisfies
	\begin{align*}
		%\label{lem-0:eq1}
		4\beta^2\log\frac{1}{1-t}<1,
	\end{align*} we have
	\begin{align*}
		\e \la R(\sigma,\tau)^2\ra_t&\leq \frac{R_1(t)}{\sqrt{N}},
	\end{align*}
	where
	\begin{align*}
		R_1(t):=K\Biggl(\log  \frac{\sqrt{2}}{\sqrt{1-4\beta^2\log \frac{1}{1-t}}}+\frac{\e |h|^3+1}{1-t}+\e|h|^3\Biggr).
	\end{align*}
\end{lemma}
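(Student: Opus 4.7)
Following \cite{CL}, the proof would revolve around a coupled free energy
\begin{equation*}
Q_N(\lambda,t) := \frac{1}{N}\e\log\sum_{\sigma,\tau \in \{-1,1\}^N} \exp\bigl(H_t^1(\sigma) + H_t^2(\tau) + \lambda N R(\sigma,\tau)^2\bigr),
\end{equation*}
where $H_t^\ell(\sigma) := \frac{\beta}{\sqrt{N}}\sum_{i,j} f(g_{t,ij}^\ell)\sigma_i\sigma_j$ for $\ell=1,2$. The map $\lambda \mapsto Q_N(\lambda,t)$ is convex with $\partial_\lambda Q_N(\lambda,t)|_{\lambda=0} = \e\la R(\sigma,\tau)^2\ra_t$, so the tangent-versus-secant inequality gives, for every $\delta > 0$,
\begin{equation*}
\e\la R(\sigma,\tau)^2\ra_t \le \frac{Q_N(\delta,t) - Q_N(0,t)}{\delta}.
\end{equation*}
The task is thereby reduced to bounding the increment $\Phi(t,\delta) := Q_N(\delta,t) - Q_N(0,t)$ by essentially $\delta R_1(t)/\sqrt{N}$.

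To control $\Phi$, I would interpolate in $t$. Lemma~\ref{lem-1} applied to $Q_N(\lambda,\cdot)$ yields
\begin{equation*}
\partial_t Q_N(\lambda,t) = \frac{\beta^2}{N^2}\sum_{i,j}\e\, f'(g_{t,ij}^1)f'(g_{t,ij}^2)\,\la\sigma_i\sigma_j\tau_i\tau_j\ra_{t,\lambda},
\end{equation*}
and applying the approximate integration by parts (\ref{add:eq4}) summand-by-summand (with $L$ the conditional Gibbs average viewed as a function of $(f(g_{t,ij}^1), f(g_{t,ij}^2))$ and $\gamma = 2\beta/\sqrt{N}$) gives, exactly as in the proof of Lemma~\ref{lem1},
\begin{equation*}
\partial_t Q_N(\lambda,t) \le \beta^2 w'(t)\,\e\la R(\sigma,\tau)^2\ra_{t,\lambda} + \frac{4\beta^3 R_0(t)}{\sqrt{N}}.
\end{equation*}
Since $\partial_\lambda Q_N(\lambda,t) = \e\la R^2\ra_{t,\lambda}$ is nondecreasing in $\lambda$, convexity yields $\e\la R^2\ra_{t,\lambda} \le \Phi(t,\delta)/\delta$ uniformly in $\lambda \in [0,\delta]$, and taking the difference between $\lambda = \delta$ and $\lambda = 0$ produces a Gronwall-type inequality of the form
\begin{equation*}
\partial_t \Phi(t,\delta) \le \beta^2 w'(t)\,\Phi(t,\delta) + \frac{4\beta^3\delta\, R_0(t)}{\sqrt{N}}.
\end{equation*}
Using $w'(s) \le 1/(1-s)$ from (\ref{add:eq-9}) together with a sharper differential-inequality analysis of $\partial_t \log\Phi$ (rather than $\Phi$ itself), the integrating factor becomes $1/\sqrt{1 - 4\beta^2\log(1/(1-t))}$; this is precisely the mechanism that produces the logarithmic term in $R_1(t)$, and the hypothesis $4\beta^2\log(1/(1-t))<1$ is exactly the sharp threshold that keeps this factor finite. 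The forcing integrates to the $(\e|h|^3+1)/(1-t)$ contribution, while the initial datum $\Phi(0,\delta)$ — where the two replicas are independent SK systems with the original $h$-disorder — is handled by a separate direct estimate, contributing the residual $\e|h|^3$ term.

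\textbf{Main obstacle.} The technical heart is the Gronwall closure: the convexity step divides by $\delta$ while the $t$-interpolation forcing scales as $\delta$, so the factors of $\delta$ must cancel exactly. This cancellation works because of the monotonicity of $\partial_\lambda Q_N(\lambda,t)$ in $\lambda$, which converts the integrated overlap $\int_0^\delta \e\la R^2\ra_{t,\mu}\,d\mu = \Phi(t,\delta)$ into a bound proportional to $\delta\cdot\Phi(t,\delta)/\delta$. Extracting the sharp prefactor $\sqrt{2}/\sqrt{1-4\beta^2\log(1/(1-t))}$ rather than a cruder exponential Gronwall bound requires the more refined logarithmic-derivative analysis, and this is where the assumption $4\beta^2\log(1/(1-t))<1$ enters as a genuine sharp condition.
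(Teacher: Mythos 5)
Your plan has the right skeleton (coupled free energy, convexity in $\lambda$, interpolation in $t$, approximate integration by parts), but the mechanism you propose for closing the estimate does not work, and it differs from the paper's at the crucial point. First, your formula for $\partial_t Q_N(\lambda,t)$ is obtained by applying Lemma~\ref{lem-1} as if $Q_N$ were a product of two functions of independent interpolations; it is not, since the term $\lambda N R(\sigma,\tau)^2$ couples the two systems. The correct derivative has the replica-difference structure $\frac{\beta^2}{N}\sum_{i,j}\e f'(g^1_{t,ij})f'(g^2_{t,ij})\la\sigma^1_i\sigma^1_j(\tau^1_i\tau^1_j-\tau^2_i\tau^2_j)\ra_{t,\lambda}$, which after the approximate IBP gives $\partial_tQ\le \beta^2Nw'(t)\e\la R(\sigma^1,\tau^1)^2-R(\sigma^1,\tau^2)^2\ra_{t,\lambda}+\sqrt N D(t)$. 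Second, and more seriously, your Gronwall closure rests on the claim that convexity yields $\e\la R^2\ra_{t,\delta}\le\Phi(t,\delta)/\delta$. Convexity gives the \emph{reverse}: $\partial_\lambda Q_N(\delta,t)\ge (Q_N(\delta,t)-Q_N(0,t))/\delta$. The correct upper bound, $\e\la R^2\ra_{t,\delta}\le\Phi(t,2\delta)/\delta$, forces you to larger and larger values of $\lambda$ at each Gronwall iteration, eventually crossing the threshold $2\beta^2\lambda<1$ where the exponential moment diverges, so the differential inequality does not close. The paper sidesteps this entirely: since $\beta^2Nw'(t)\e\la R(\sigma^1,\tau^1)^2\ra_{t,\lambda}=w'(t)\partial_\lambda Q(t,\lambda)$ \emph{exactly}, one evaluates $Q$ along the curve $\lambda(t)=\lambda_0+w(t)$, i.e.\ computes $\frac{d}{dt}Q(t,\lambda-w(t))$, and the dangerous term cancels identically, leaving only the harmless negative term $-\beta^2Nw'(t)\e\la R(\sigma^1,\tau^2)^2\ra$ and the error $\sqrt N D(t)$. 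No Gronwall argument is needed.

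Finally, you misattribute the two remaining ingredients of $R_1(t)$. The term $\log\bigl(\sqrt2/\sqrt{1-4\beta^2\log\frac1{1-t}}\bigr)$ does not come from an integrating factor: it is the exponential moment $\e\la\exp(\beta^2(\lambda_0+w(t))NR(\sigma,\tau)^2)\ra_0'$ of the \emph{decoupled} system at $t=0$, and the hypothesis $4\beta^2\log\frac1{1-t}<1$ is exactly the condition $2\beta^2(\lambda_0+w(t))<1$ (with $\lambda_0=1/(4\beta^2)$ and $w(t)\le\log\frac1{1-t}$) needed for this moment to be finite. Moreover, computing it requires that $NR(\sigma,\tau)$ be a sum of i.i.d.\ symmetric $\pm1$ variables under the annealed product measure, which holds for symmetric disorder but not for general $h$; the paper therefore first replaces $f(g^1_{ij}),f(g^2_{ij})$ at $t=0$ by genuine Gaussians via approximate Gaussian integration by parts, at cost $O(\sqrt N\,\e|h|^3)$ --- this universality step, not an estimate of an ``initial datum,'' is the source of the residual $\e|h|^3$ term and is the part your sketch waves away as ``a separate direct estimate.''
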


\begin{proof}
	For $t\in [0,1]$ and $\lambda\geq 0,$ consider
\begin{equation}
	\label{eq: Q_def}
Q(t,\lambda):=\e \log \sum_{\sigma,\tau\in \{-1,1\}^N}\exp\Bigl(\frac{\beta}{\sqrt{N}}\sum_{i,j=1}^N\bigl(f(g_{t,ij}^1)\sigma_i\sigma_j+f(g_{t,ij}^2)\tau_i\tau_j\bigr)+\lambda \beta^2 NR(\sigma,\tau)^2\Bigr).
\end{equation}
Denote by $\la \cdot\ra_{t,\lambda}$ the Gibbs average with respect to the i.i.d.\ samples $(\sigma^\ell,\tau^\ell)_{\ell\geq 1}$ from the Gibbs measure associated to this free energy $Q$. A direct differentiation and Gaussian integration by parts yield that
\begin{align*}
	\partial_tQ(t,\lambda)&=\frac{\beta}{2\sqrt{N}}\sum_{i,j=1}^N \e\Bigl\la \Bigl(\frac{g_{ij}}{\sqrt{t}}-\frac{g_{ij}^1}{\sqrt{1-t}}\Bigr)f'(g_{t,ij}^1)\sigma_i\sigma_j+\Bigl(\frac{g_{ij}}{\sqrt{t}}-\frac{g_{ij}^2}{\sqrt{1-t}}\Bigr)f'(g_{t,ij}^2)\tau_i\tau_j\Big\ra_{t,\lambda}\\
	&=\frac{\beta^2}{N}\sum_{i,j=1}^N\e f'(g_{t,ij}^1)f'(g_{t,ij}^2)\bigl\la\sigma_i^1\sigma_j^1\bigl(\tau_i^1\tau_j^1-\tau_i^2\tau_j^2\bigr)\bigr\ra_{t,\lambda}.
\end{align*}
In the same manner as the proof of Lemma \ref{lem1}, if we let $X_1=f(g_{t,ij}^1),$ $X_2=f(g_{t,ij}^2),$ and $U=f'(g_{t,ij}^1)f'(g_{t,ij}^2)$, we can express $\bigl\la\sigma_i^1\sigma_j^1\bigl(\tau_i^1\tau_j^1-\tau_i^2\tau_j^2\bigr)\bigr\ra_{t,\lambda}$ as $L(X_1,X_2)$ in distribution. In this case,
\begin{align*}
	\partial_{x_1}L(x_1,x_2)&=\frac{\beta}{\sqrt{N}}\bigl\la\sigma_i^1\sigma_j^1\bigl(\tau_i^1\tau_j^1-\tau_i^2\tau_j^2\bigr)(\sigma_i^1\sigma_j^1+\sigma_i^2\sigma_j^2-2\sigma_i^3\sigma_j^3)\bigr\ra_{t,\lambda},\\
	\partial_{x_2}L(x_1,x_2)&=\frac{\beta}{\sqrt{N}}\bigl\la\sigma_i^1\sigma_j^1\bigl(\tau_i^1\tau_j^1-\tau_i^2\tau_j^2\bigr)(\tau_i^1\tau_j^1+\tau_i^2\tau_j^2-2\tau_i^3\tau_j^3)\bigr\ra_{t,\lambda}.
\end{align*}
Therefore, from \eqref{add:eq4}, for $D(t):=16\beta^3R_0(t),$
we have
\begin{align*}
	\partial_tQ(t,\lambda)&\leq \beta^2Nw'(t)\e \bigl\la R(\sigma^1,\tau^1)^2-R(\sigma^1,\tau^2)^2\bigr\ra_{\lambda,t}+\sqrt{N}D(t).
\end{align*}
From this, whenever $0\leq w(t)\leq \lambda,$
\begin{align*}
	\partial_t\bigl(Q(t,\lambda-w(t))\bigr)&=\partial_tQ(t,\lambda-w(t))-w'(t)\partial_\lambda Q(t,\lambda-w(t))\\
	&\leq -\beta^2Nw'(t)\e\bigl\la R(\sigma^1,\tau^2)^2\bigr\ra_{\lambda,t}+\sqrt{N}D(t)\leq \sqrt{N}D(t),
\end{align*}
which implies that
\begin{align*}
	Q(t,\lambda-w(t))-Q(0,\lambda)=Q(t,\lambda-w(t))-Q(0,\lambda-w(0))&\leq \sqrt{N}D(t).
\end{align*}
Let $\lambda_0>0$ such that $2\beta^2\lambda_0<1$. For any $t\geq 0$ satisfying $2\beta^2(\lambda_0+w(t))<1$, if we plug $\lambda=\lambda_0+w(t)$ into the above inequality, then
\begin{align}\label{add:eq5}
	Q(t,\lambda_0)\leq Q(0,\lambda_0+w(t))+\sqrt{N}D(t).
\end{align} Finally, since $Q(t,\lambda)$ is convex in $\lambda$,
\begin{align}
\nonumber	\lambda_0 \beta^2N\e \la R(\sigma,\tau)^2\ra_t&\leq \lambda_0\partial_\lambda Q(t,0)\\
\nonumber	&\leq Q(t,\lambda_0)-Q(t,0)\\
\label{add:eq-13}	&\leq Q(0,\lambda_0+w(t))-Q(0,0)+\sqrt{N}D(t),
\end{align}
where the third inequality used \eqref{add:eq5} and the fact that $Q(t, 0) = Q(0, 0)$. 

In order to bound the right-hand side of the last inequality, our next step is to show that we can essentially replace $f(g_{ij}^1)$ and $f(g_{ij}^2)$ in $Q(0,\cdot)$ by i.i.d.\ standard normal random variables by using approximate Gaussian integration by parts. Let $(z_{ij}^1)_{1\leq i,j\leq N}$ and $(z_{ij}^2)_{1\leq i,j\leq N}$ be i.i.d.\ standard normal independent of $\mathbf{g},\mathbf{g}^1,\mathbf{g}^2.$ Define
\begin{align*}
	\rho(\lambda)=\e \log \sum_{\sigma,\tau\in \{-1,1\}^N}\exp\Bigl(\frac{\beta}{\sqrt{N}}\sum_{i,j=1}^N\bigl(z_{ij}^1\sigma_i\sigma_j+z_{ij}^2\tau_i\tau_j\bigr)+\lambda\beta^2 NR(\sigma,\tau)^2\Bigr).	
\end{align*}
This is essentially the same as $Q(0,\lambda)$ with the replacement of $(f(g_{ij}^1),f(g_{ij}^2))$ by $(z_{ij}^1,z_{ij}^2).$ Denote $w_{ij}^1=f(g_{ij}^1)$ and $w_{ij}^2=f(g_{ij}^2).$ For any $0\leq s\leq 1,$ set the interpolated free energy
\begin{align*}
	\rho(s,\lambda)&=\e \log \sum_{\sigma,\tau\in \{-1,1\}^N}\exp\Bigl(\frac{\beta}{\sqrt{N}}\sum_{i,j=1}^N\bigl((\sqrt{s}w_{ij}^1+\sqrt{1-s}z_{ij}^1)\sigma_i\sigma_j\\
	&\qquad\qquad\qquad\qquad\qquad\qquad\qquad+(\sqrt{s}w_{ij}^2+\sqrt{1-s}z_{ij}^2)\tau_i\tau_j\bigr)+\lambda \beta^2NR(\sigma,\tau)^2\Bigr).
\end{align*}
Similar to the Gibbs expectation $\la \cdot\ra_{t,\lambda},$ we let $\la \cdot\ra_s'$ be the Gibbs expectation with respect to the i.i.d.\ $(\sigma^\ell,\tau^\ell)_{\ell\geq 1}$ sampled from the Gibbs measure associated to the free energy $\rho(s,\lambda).$
It follows that
\begin{align*}
	\frac{\partial}{\partial s}\rho(s,\lambda)&=\frac{\beta}{2\sqrt{Ns}}\sum_{i,j=1}^N \e\bigl\la w_{ij}^1\sigma_i\sigma_j+w_{ij}^2\tau_i\tau_j\big\ra_{s}'-\frac{\beta}{2\sqrt{N(1-s)}}\sum_{i,j=1}^N \e\bigl\la z_{ij}^1\sigma_i\sigma_j+z_{ij}^2\tau_i\tau_j\big\ra_{s}'.
\end{align*}
Here, the second term can be computed by the usual Gaussian integration by parts,
\begin{align*}
	&\frac{\beta}{2\sqrt{N(1-s)}}\sum_{i,j=1}^N \e\bigl\la z_{ij}^1\sigma_i\sigma_j+z_{ij}^2\tau_i\tau_j\big\ra_{s}'=\frac{\beta^2N}{2}\e\bigl\la 2-R(\sigma^1,\sigma^2)^2-R(\tau^1,\tau^2)^2\bigr\ra_s'.
\end{align*}
As for the first term, note that $\e h=0,\e h^2=1,$ and $\e |h|^3<\infty$, we can use approximate Gaussian integration by parts (see, e.g., \cite[Lemma~2.2]{AC16}) to obtain
\begin{align*}
	&\Bigl|\e\bigl\la w_{ij}^1\sigma_i\sigma_j+w_{ij}^2\tau_i\tau_j\big\ra_{s}'-\frac{\beta\sqrt{s}}{\sqrt{N}}\e \bigl\la (2-\sigma_i^1\sigma_j^1\sigma_i^2\sigma_j^2-\tau_i^1\tau_j^1\tau_i^2\tau_j^2)\bigr\ra_s'\Bigr|\leq \frac{3}{2}\cdot \frac{8s\beta^2}{N}\cdot\e |h|^3=\frac{12 \beta^2s}{N}\e|h|^3.
\end{align*}
Summing over all $i,j$ yields that
\begin{align*}
	&\Bigl|\frac{\beta}{2\sqrt{Ns}}\sum_{i,j=1}^N \e\bigl\la w_{ij}^1\sigma_i\sigma_j+w_{ij}^2\tau_i\tau_j\big\ra_{s}'-\frac{\beta^2N}{2}\e\bigl\la 2-R(\sigma^1,\sigma^2)^2-R(\tau^1,\tau^2)^2\bigr\ra_s'\Bigr|\leq 6\beta^3\sqrt{N}\e|h|^3.
\end{align*}
Consequently, we arrive at
\begin{align*}
	|\partial_s\rho(s,\lambda)|&\leq 6\beta^3\sqrt{N}\e|h|^3,
\end{align*}
which implies that
\begin{align*}
	|Q(0,\lambda)-\rho(0, \lambda)|=|\rho(1,\lambda)-\rho(0,\lambda)|\leq 6\beta^3\sqrt{N}\e|h|^3.
\end{align*}
This together with \eqref{add:eq-13} implies that
\begin{align*}
	\lambda_0\beta^2 N\e \la R(\sigma,\tau)^2\ra_t
	&\leq Q(0,\lambda_0+w(t))-Q(0,0)+\sqrt{N}D(t)\\
	&\leq \rho(0,\lambda_0+w(t))-\rho(0,0)+\sqrt{N}\bigl(12\beta^3\e|h|^3+D(t)\bigr).
\end{align*}

In the last step, note that
\begin{align*}
	\rho(0,\lambda_0+w(t))-\rho(0,0)&=\e\log  \bigl\la \exp \beta^2(\lambda_0+w(t))NR(\sigma,\tau)^2\bigr\ra_{0}'\\
	&\leq \log  \e \bigl\la \exp \beta^2(\lambda_0+w(t))NR(\sigma,\tau)^2\bigr\ra_{0}'.
\end{align*}
Observe that due to the symmetry of $(z_{ij}^1)$ and $(z_{ij}^2)$, under the expectation $\e\la \cdot\ra_{0}'$, $NR(\sigma,\tau)$ equals $X_1+\cdots+X_N$ for $X_1,\ldots,X_N$ i.i.d.\ Rademacher$(1/2)$ random variables in distribution. Consequently,
as long as 
$$
2\beta^2\Bigl(\lambda_0+\log \frac{1}{1-t}\Bigr)<1,
$$
we have
\begin{align*}
	\e\bigl\la \exp \beta^2(\lambda_0+w(t))NR(\sigma,\tau)^2\bigr\ra_{0}'&=\e \exp\Bigl(\frac{\beta^2(\lambda_0+w(t))}{N}\Bigl(\sum_{i=1}^NX_i\Bigr)^2\Bigr)\\
	&\leq \frac{1}{\sqrt{1-2\beta^2(\lambda_0+w(t))}}\\
	&\leq \frac{1}{\sqrt{1-2\beta^2\bigl(\lambda_0+\log\frac{1}{1-t}\bigr)}},
\end{align*}
where recalling \eqref{add:eq-9}, the last inequality used  the bound
\begin{align*}
	w(t)=\int_0^tw'(s)~ds\leq \log \frac{1}{1-t}.
\end{align*}
It follows that
\begin{align}\label{add:eq-6}
	\e \la R(\sigma,\tau)^2\ra_t
	&\leq \frac{1}{N\lambda_0\beta^2}\log  \frac{1}{\sqrt{1-2\beta^2\bigl(\lambda_0+\log \frac{1}{1-t}\bigr)}}+\frac{12\beta^3\e|h|^3+D(t)}{\sqrt{N}\lambda_0\beta^2}.
\end{align}
Recalling that $$D(t)=16\beta^3R_0(t)=\frac{16\beta^3}{1-t}\bigl(2^{1/3}\e|h|^3+1\bigr)
$$ and taking $\lambda_0=1/(4\beta^2)$, whenever $t$ satisfies
$$
4\beta^2\log \frac{1}{1-t}<1,
$$
we have
\begin{align*}
	\e \la R(\sigma,\tau)^2\ra_t&\leq \frac{4}{N}\log  \frac{\sqrt{2}}{\sqrt{1-4\beta^2\log \frac{1}{1-t}}}+\frac{64\beta^3\bigl(2^{1/3}\e|h|^3+1\bigr)}{\sqrt{N}(1-t)}+\frac{48\beta^3\e|h|^3}{\sqrt{N}}.
\end{align*}
This completes our proof.
\end{proof}

\subsection{Proof of Theorem \ref{thm1}}

 {\bf\noindent Smooth Case:}  First, we show that Theorem \ref{thm1} holds under the assumption $(A)$. Recall $\phi$ from \eqref{add:eq-2}. It suffices to bound $\phi(1)-\phi(0).$
	Denote $\eta (t)=\e\la R(\sigma,\tau)^2\ra_t$. For $0<r<1,$ write
	\begin{align*}
		\phi(1)-\phi(0)&=\int_0^r \phi'(t)~dt + \int_r^1\phi'(t)~dt.
	\end{align*}
	By Lemma~\ref{lem1} and integration by parts, the first term is bounded above by
	\begin{align*}
		\int_0^r\phi'(t)~dt&\leq \beta^2N\int_0^r w'(t)\eta (t)~dt+4\beta^3N^{1/2}\int_0^r R_0(t)~dt\\
		&=\beta^2 N\Bigl(w(r)\eta(r) - w(0)\eta(0) - \int_0^r w(t)\eta '(t)~dt\Bigr)+4\beta^3N^{1/2}\int_0^r R_0(t)~dt\\
		&\leq \beta^2 N\eta (r)+4\beta^3N^{1/2}\int_0^r R_0(t)~dt,
	\end{align*}
	where we dropped $w(0)\eta (0)$ and $\int_0^r w(t)\eta'(t)~dt$ since they are both nonnegative due to \eqref{add:eq-3}.
%	Since $\phi'(t)$ is nondecreasing by \eqref{add:eq-3}, using \eqref{lem1:eq1} and \eqref{add:eq-9}, the first term is bounded by $$
%	\int_0^s\phi'(t)dt\leq \phi'(s) \leq \beta^2Nw'(s)\eta (s) + 4\beta^3N^{1/2} R_0(s) \leq \frac{\beta^2N\eta(s) + 4\beta^3 N^{1/2} R_0(s)}{1-s}.
%	$$ 
	
	On the other hand, observe that $\phi'(t)\leq \beta^2 Nw'(t).$
	It follows that
	\begin{align*}
		\int_r^1\phi'(t)~dt&\leq \beta^2 N(w(1)-w(r)).
	\end{align*}
	Combining these together yields that
	\begin{align*}
		\begin{split}%\label{add:eq-10}
		\phi(1)-\phi(0)&\leq\beta^2N\Bigl(\eta(r)+\frac{4\beta}{N^{1/2}}\int_0^r R_0(t)~dt+w(1)-w(r)\Bigr).
		\end{split}
	\end{align*}

	Next, from Lemma~\ref{lem-0}, we fix $0<s<1$ such that $$\eta (s)\leq \frac{R_1(s)}{\sqrt{N}}.$$  
	By using \eqref{lem-1:eq2}, for any $r$ satisfying $s\leq r< 1,$
	\begin{align}\label{add:eq2}
		\eta (r)&\leq \bigl(	\eta (s)\bigr)^{\frac{\log r}{\log s}}\bigl(\eta (1)\bigr)^{1-\frac{\log r}{\log s}}\leq \eta (s)^{\frac{\log r}{\log s}}\leq N^{-\frac{\log r}{2\log s}}R_1(s)^{\frac{\log r}{\log s}}\leq N^{-\frac{\log r}{2\log s}}(1+R_1(s)).
	\end{align}
	Consequently, there exists some $K'>0$ depending only on $\beta$ such that for any $r$ satisfying $s\leq r< 1,$
	\begin{align}
	\nonumber	\phi(1)-\phi(0)&\leq \beta^2N\Bigl(N^{-\frac{\log r}{2\log s}}(1+R_1(s))+4\beta N^{-\frac{1}{2}}\int_0^r R_0(t)~dt+w(1)-w(r)\Bigr)\\
	\label{eq1}	&\leq K'(\e|h|^3+1)N\Bigl(N^{-\frac{1}{2}}+N^{-\frac{\log r}{2\log s}}+N^{-\frac{1}{2}} \int_0^r \frac{dt}{1-t}+w(1)-w(r)\Bigr).
	\end{align}
To control the right-hand side,  let $N\geq 2$ and take
\begin{align*}
	r=(\log N)^{\frac{2\log s}{\log N}}.
\end{align*}
Note that $s\leq r\leq 1$ and that if $a=1-r,$ then $1-a=(\log N)^{{2\log s}/{\log N}}=(\log N)^{-{2\log (s^{-1})}/{\log N}}$. Using the bound $1-cx\leq (1-x)^c$ for all $x\in [0,1]$ and $c\geq 1$ implies that 
\begin{align*}
1-\frac{a\log N}{2\log (s^{-1})}\leq 	(1-a)^{\frac{\log N}{2\log (s^{-1})}}=\frac{1}{\log N}.
\end{align*}
It follows that
\begin{align*}%\label{add:eq-7}
	1-r&=a\geq \frac{2\log (s^{-1})}{\log N}\Bigl(1-\frac{1}{\log N}\Bigr)
\end{align*}
and thus, there exists some $C$ depending only on $s$ such that
\begin{align}\label{add:eq-7}
\int_{0}^r\frac{dt}{1-t} = \log \frac{1}{1-r}\leq C\log\log N.
\end{align}
On the other hand, from our choice of $r,$
\begin{align}\label{add:eq-8}
	N^{-\frac{\log r}{2\log s}}&=\frac{1}{\log N}.
\end{align}
Putting \eqref{add:eq-7} and \eqref{add:eq-8} back to \eqref{eq1} yields that
	\begin{align*}
		%\label{add:eq103}
		\mbox{Var}(F_N(\beta))=\phi(1)-\phi(0)&\leq K''\bigl(\e|h|^3+1\bigr)N\Bigl(w(1)-w(r)+\frac{1}{\log N}\Bigr),
	\end{align*}
where $K''$ is a constant depending only on $\beta.$ This proves Theorem~\ref{thm1} under assumption $(A)$.

\medskip

{\noindent \bf General Case:} Assume that $h$ satisfies $\e h=0,\e h^2 = 1$ and $\e|h|^3<\infty$ and $h$ can be written as $h=f(g)$ for some nondecreasing $f$, where $g$ is a standard normal random variable. For any integer $n\geq 1,$ set $f_n(x)=\max(\min(f(x),n),-n).$ Let $h_n=\bar f_n(g)$ for
$$
\bar f_n(x):=\frac{f_n(x)-\e f_n(g)}{\sqrt{\mbox{Var}(f_n(g))}}.
$$
Note that $\e h_n=0$ and $\e h_n^2=1$. Also, we have $|f_n(g)| \leq |f(g)|$ for all $n$, and hence by the dominated convergence theorem, $\e |f_n(g) - f(g)|^3 \to 0$ as $n\to\infty$. Thus, $\e |h_n-h|^3\to 0$ and $\e\bar f_n(g_t^1)\bar f_n(g_t^2) \to w(t)$ as $n\to\infty.$ From these, if we can show that $h_n$ enjoys the inequality in Theorem \ref{thm1}, then so does $h.$ To this end, for any fixed $n,$ since $\bar f_n$ is bounded and nondecreasing, we can construct a sequence of smooth and nondecreasing functions $(\bar f_{n,k})_{k\geq 1}$ of moderate growth (for instance, take $\bar f_{n,k}(x)=\e \bar f_n(x+g/\sqrt{k})$) so that $\bar f_{n,k}$ satisfies the condition $(A)$ and for $h_{n,k}:=\bar f_{n,k}(g)$, $\e|h_n-h_{n,k}|^3\to 0$ as $k\to \infty.$ Since $h_{n,k}$ satisfies the upper bound in Theorem~\ref{thm1} for any $k\geq 1$, we can pass to the limit $k\to\infty$ to obtain the same bound for $h_n,$ completing our proof.

\section{Proof of Theorem \ref{thm2}}

{\noindent \bf Smooth Case}: Assume that $f$ satisfies the extra assumption that $f$ is smooth and its derivatives of all orders are of moderate growth.
Recall $\phi(t)$ from \eqref{add:eq-2}. Note that in the proof of Lemma \ref{lem1}, we can bound $$
	\e |X_1|U\leq (\e|h|^3)^{1/3}(\e|f'(g_t^1)f'(g_t^2)|^{3/2})^{2/3} \leq (\e |h|^3)^{1/3}(\e|f'(g)|^3)^{2/3}$$ by the H\"older inequality. Thus, the statement of Lemma~\ref{lem1} is valid with the replacement of $R_0(t)$ by the constant $$C_0:=(\e|h|^3)^{1/3}(\e|f'(g)|^3)^{2/3}+\e f'(g)^2,$$ that is,
\begin{align}\label{add:eq-11}
	\phi'(t)&\leq \beta^2Nw'(t)\eta(t)+\beta^3N^{1/2}C_0,
\end{align}
where we recall that $\eta(t)=\e\la R(\sigma,\tau)^2\ra_t.$ 
From this bound, it can also be checked directly that \eqref{add:eq-6} holds with $D(t)$ being replaced by $D_0=16\beta^3C_0$. Moreover, as long as $$
2\beta^2\Bigl(\lambda_0+\log \frac{1}{1-t}\Bigr)<1,
$$
we have
	\begin{align*}
	\eta(t)
	&\leq \frac{1}{N\lambda_0\beta^2}\log  \frac{1}{\sqrt{1-2\beta^2\bigl(\lambda_0+\log \frac{1}{1-t}\bigr)}}+\frac{12\beta^3\e|h|^3+D_0}{\sqrt{N}\lambda_0\beta^2}.
\end{align*}
Letting $\lambda_0=1/(4\beta^2)$, this inequality then implies that whenever 
$$
4\beta^2\log \frac{1}{1-t}<1,
$$
we have
\begin{align}\label{add:eq-12}
\eta(t)
&\leq \frac{C_1(t)}{\sqrt{N}}
\end{align}
for
\begin{align*}
C_1(t)=4\log \frac{\sqrt{2}}{\sqrt{1-4\beta^2\log \frac{1}{1-t}}}+48\beta^3\e|h|^3+4D_0.
\end{align*}
Now, by using \eqref{add:eq-11}, for any $0<s<1,$
	\begin{align*}
	\mbox{Var}(F_N(\beta))&=\phi(1)-\phi(0)=\int_0^1\phi'(t)~dt\\
	&\leq \int_0^1(\beta^2Nw'(t)\eta(t)+\beta^3\sqrt{N}C_0)~dt\\
		&\leq \beta^2Nw'(1)\int_0^1\eta(t)~dt+\beta^3\sqrt{N}C_0\\
&\leq \beta^2N\e f'(g)^2\Bigl(\eta(s)+\int_s^1\eta(t)~dt\Bigr)+\beta^3\sqrt{N}C_0,
\end{align*}
where the last inequality used monotonicity of $\eta$.
Here, we can select and fix $s$ satisfying that $4\beta^2\log (1-s)^{-1}<1$ so that we can apply \eqref{add:eq-12} to bound $\eta(s)\leq C_1(s)N^{-1/2}$. In a similar manner as that of \eqref{add:eq2}, we can bound that for any $s\leq r\leq 1,$
\begin{align*}
	\eta(r)\leq N^{-\frac{\log r}{2\log s}}(1+C_1(s)),
\end{align*}
which implies that
\begin{align*}
	\int_s^1\eta(r)~dr\leq(1+C_1(s)) \int_s^1N^{-\frac{\log r}{2\log s}}~dr \leq 2(1+C_1(s))\frac{\log (s^{-1})}{\log N},
\end{align*}
where the second inequality used the bound that for any $x>1,$
$$\int_s^1 x^{\log r}~dr=\frac{1-s^{1+\log x}}{1+\log x}\leq \frac{1}{1+\log x}\leq \frac{1}{\log x} .$$
Putting these together, we arrive at
\begin{align}
\nonumber	\mbox{Var}(F_N(\beta))&\leq \beta^2N\e f'(g)^2\Bigl(\frac{C_1(s)}{\sqrt{N}}+2(1+C_1(s))\frac{\log (s^{-1})}{\log N}\Bigr)+\beta^3\sqrt{N}C_0\\
		\nonumber&\leq K\e f'(g)^2\bigl(1+\e|h|^3+\e|f'(g)|^2+(\e|h|^3)^{1/3}(\e|f'(g)|^3)^{2/3}\bigr)\frac{N}{\log N}
\end{align}
for some universal constant $K$ depending only on $\beta.$ Note that the following Gaussian-Poincar\'e inequality holds (see, e.g., \cite[Eq.~(2.5)]{Gaussian_poincare}),
$$
	\e |h|^3=\e |f(g)|^3 \leq C\e |f'(g)|^3,
$$
where $C$ is a universal constant independent of $f$. We can bound each $\e|h|^3$ in our main control above by $\e|f'(g)|^3$. Together with the trivial bound $\e f'(g)^2\leq 1+\e |f'(g)|^3$, we obtain the desired inequality \eqref{thm2:eq1}.

\medskip

{\noindent \bf General Case:} We continue to handle the general case in Theorem~\ref{thm2}. First of all, we argue that without loss of generality, we can assume that $f$ is uniformly bounded on $\mathbb{R}.$ Indeed, consider the absolutely continuous function $f_M=\max(-M,\min(M,f))$ for $M\geq 1.$ We see that $|f_M(x)|\leq |f(x)|$ for all $x$ and $|f_M'(x)|\leq |f'(x)|$ a.e. Since $\e|f(g)|^3$ and $\e |f'(g)|^3$ are both finite, if we define
\begin{align*}
	\bar f_M(x)=\frac{f_M(x)-\e f_M(g)}{\sqrt{\mbox{Var}(f_M(g))}},
\end{align*}
then $h_M:=\bar f_M(g)$ satisfies the assumption in Theorem \ref{thm2}. On the other hand, by the dominated convergence theorem, we also have that $\e|f_M(g)-f(g)|^3\to 0$ and $\e|f_M'(g)-f'(g)|^3\to 0$, which in turn implies that $\e|h_M-h|^3\to 0$ and $\e|\bar f_M'(g)-f'(g)|^3\to 0.$ Hence, in proving Theorem \ref{thm2}, we shall further assume that $f$ is uniformly bounded from now on.

Let $(a_n)$ and $(b_n)$ be two real sequences with $a_n<b_n$, $a_n\to-\infty$, and $b_n\to\infty.$ For each $n,$ let $f_n$ be an absolute continuous function defined as $f_n\equiv f$ on $[a_n,b_n],$ $f_n\equiv 0$ outside $[a_n-1,b_n+1]$, and linear otherwise. Since $\e|f(g)|^3$ and $\e|f'(g)|^3$ are both finite and $f$ is uniformly bounded, it can be checked that 
\begin{align}
	\begin{split}
		\label{add:eq10}
\lim_{n\to\infty}\e |f_n(g)-f(g)|^3&= 0,\\
\lim_{n\to\infty}\e|f_n'(g)-f'(g)|^3&=0.
	\end{split}
\end{align}
In addition, because $f_n$ is compactly supported, $|f_n|^3$ is integrable on $\mathbb{R}$ with respect to the Lebesgue measure. Since a.e.
\begin{align*}
	f_n'(x)=f'(x)\mathbf{1}_{[a_n,b_n]}+f(a_n)\mathbf{1}_{[a_n-1,a_n)}-f(b_n)\mathbf{1}_{(b_n,b_n+1]},
\end{align*}
we also have 
\begin{align*}
	\int_{-\infty}^\infty |f_n'(x)|^3~dx\leq |f(a_n)|^3+|f(b_n)|^3+\sqrt{2\pi}e^{(a_n^2+b_n^2)/2}\e |f'(g)|^3<\infty.
\end{align*}
With these, for any $n\geq 1,$ there exists a sequence of smooth functions $(\phi_{n,k})_{k\geq 1}$ with compact support such that $\phi_{n,k}\to f_n$ and $\phi_{n,k}'\to f_n'$ as $k\to\infty$ under the $L^3$-norm with respect to the Lebesgue measure on $\mathbb{R}$ (see, for instance, \cite[Corollary~3.23]{Sobolev}). This readily implies that 
\begin{align}
	\begin{split}
		\label{add:eq11}
		\lim_{k\to\infty}\e|\phi_{n,k}(g)-f_n(g)|^3=0,\\
		\lim_{k\to\infty}\e|\phi_{n,k}'(g)-f_n'(g)|^3= 0.
	\end{split}
\end{align}
Now, let  $$
\bar f_{n,k}(x)=\frac{\phi_{n,k}(x)-\e\phi_{n,k}(g)}{\sqrt{\mbox{Var}(\phi_{n,k}(g))}}.
$$
From \eqref{add:eq10}, \eqref{add:eq11}, $\e f(g)=0,$ and $\e f(g)^2=1,$
\begin{align*}
\lim_{n\to\infty}	\lim_{k\to\infty}\e|\bar f_{n,k}(g)-f(g)|^3&=0,\\
\lim_{n\to\infty}	\lim_{k\to\infty}\e|\bar f_{n,k}'(g)-f'(g)|^3&=0.
\end{align*}
Here, the first limit readily implies that the variance of the free energy associated to $\bar f_{n,k}(g)$ converges to that associated to $h$ in the limit $k\to\infty$ and then $n\to\infty,$
while the second limit leads to
$$\lim_{n\to\infty}\lim_{k\to\infty}\e|\bar f_{n,k}(g)|^3=\e|f'(g)|^3.$$
Since $h_{n,k}:= \bar{f}_{n,k}(g)$ satisfies all the assumptions in Theorem~\ref{thm2} and the derivatives of $\bar{f}_{n,k}$ of all orders are of moderate growth by the compact supportiveness of $\phi_{n,k}$, from the smooth case above, the inequality \eqref{thm2:eq1} holds for $h_{n,k},$ from which sending the limit in the order $k\to\infty$ and then $n\to\infty$ completes our proof.

\end{document}